\documentclass[11pt]{article}

\usepackage[margin=1in]{geometry}
\usepackage[utf8]{inputenc}
\usepackage[T1]{fontenc}
\usepackage{xcolor}
\usepackage{mathtools}
\usepackage{amsthm}
\usepackage{amsfonts}
\usepackage{algorithm}
\usepackage{algorithmic}
\usepackage{comment}
\usepackage{graphicx}
\usepackage{booktabs}
\usepackage{nicefrac}
\usepackage{microtype}
\usepackage{url}
\usepackage{hyperref}
\usepackage{bbm}
\usepackage[style=nature]{biblatex}

\hypersetup{
  colorlinks,
  linkcolor={red!40!gray},
  citecolor={blue!40!gray},
  urlcolor={blue!70!gray}
}

\newcommand{\e}{\varepsilon}
\DeclareMathOperator{\nnz}{nnz}
\DeclareMathOperator{\rank}{rank}
\DeclareMathOperator*{\tr}{tr}

\DeclareMathOperator*{\diag}{diag}

\newtheorem{theorem}{Theorem}[section]

\newtheorem{lemma}[theorem]{Lemma}

\theoremstyle{remark}
\newtheorem{remark}[theorem]{Remark}

\theoremstyle{definition}
\newtheorem{definition}[theorem]{Definition}

\newcommand{\bS}{\mathbf{S}}
\newcommand{\bU}{\mathbf{U}}
\newcommand{\bV}{\mathbf{V}}
\newcommand{\bI}{\mathbf{I}}
\newcommand{\bX}{\mathbf{X}}
\newcommand{\bY}{\mathbf{Y}}
\newcommand{\bM}{\mathbf{M}}
\newcommand{\bx}{\mathbf{x}}
\newcommand{\by}{\mathbf{y}}
\newcommand{\bu}{\mathbf{u}}
\newcommand{\bv}{\mathbf{v}}

\newcommand{\bD}{\mathbf{D}}

\newcommand{\bA}{\mathbf{A}}
\newcommand{\bP}{\mathbf{P}}
\newcommand{\bB}{\mathbf{B}}
\newcommand{\bBt}{\widetilde{\mathbf{B}}}

\newcommand{\bXt}{\widetilde{\mathbf{X}}}

\newcommand{\bC}{\mathbf{C}}

\newcommand{\bQ}{\mathbf{Q}}
\newcommand{\bW}{\mathbf{W}}
\newcommand{\bK}{\mathbf{K}}
\newcommand{\bSigma}{\mathbf{\Sigma}}
\newcommand{\bOmega}{\mathbf{\Omega}}
\newcommand{\orth}{{\mathrm{orth}}}

\DeclarePairedDelimiter{\norm}{\lVert}{\rVert}

\DeclarePairedDelimiter{\ip}{\langle}{\rangle}
\DeclarePairedDelimiter{\paren}{(}{)}
\DeclarePairedDelimiter{\sqbr}{[}{]}

\newcommand{\E}{\mathbb{E}}
\newcommand{\R}{\mathbb{R}}

\title{Accelerating Power Method with Fast Sketching\\ for Stronger Low-Rank Approximation}

\author{
  Shabarish Chenakkod\thanks{University of Michigan, Ann Arbor, MI 48105. Email: \texttt{shabari@umich.edu}.}
  \and
  Michał Dereziński\thanks{University of Michigan, Ann Arbor, MI 48105. Email: \texttt{derezin@umich.edu}.}
}

\date{}

\begin{document}

\maketitle

\begin{abstract}
The power method is one of the most fundamental tools for extracting top principal components from data through low-rank matrix approximation. Yet, when the target rank is large, the cost of matrix multiplication associated with this procedure becomes a major bottleneck. We develop an algorithmic and theoretical framework for accelerating the power method using fast sketching, which is a popular paradigm in randomized linear algebra. Our framework leads to simple and provably efficient methods for singular value decomposition, low-rank factorization, and Nystr\"om approximation, which attain strong numerical performance on benchmark problems. The key novelty in our analysis is the use of regularized spectral approximation, a property of fast sketching methods which proves more flexible in generalizing power method guarantees than traditional arguments.
\end{abstract}

\section{Introduction}

Computing top singular values and singular vectors of a large matrix $\bA\in\R^{m\times n}$ is one of the most fundamental tasks in data science. A simple yet powerful approach for doing this is the \emph{power method}. In its most basic form, the power method starts from a random (say, Gaussian) vector $\boldsymbol{\omega}\in\R^n$. Then, it repeatedly applies matrix $\bA$ and its transpose, obtaining a new vector $\by = (\bA\bA^\top)^q\bA\boldsymbol{\omega}$. As $q$ grows large, this vector becomes more and more closely aligned with the top singular vector of $\bA$. A natural extension of this strategy when seeking top $k$ singular vectors is to start from a random Gaussian matrix $\bOmega\in\R^{n\times O(k)}$, and then compute an orthonormal basis for the columns of a powered-up matrix $\bY = (\bA\bA^\top)^q\bA\bOmega$. The resulting matrix $\bQ = \mathrm{orth}(\bY)$, known as the \emph{range finder}, defines a subspace that is closely aligned with the span of the top $k$ singular vectors of $\bA$. This simple procedure offers strong theoretical guarantees for its approximation quality and lies at the core of widely used algorithms such as Randomized SVD \cite{halko2011finding} as well as many other low-rank factorization techniques \cite{rokhlin2010randomized,gu2015subspace,tropp2023randomized}. However, as $k$ gets large, the matrix multiplication cost associated with repeatedly applying $\bA$ becomes a major bottleneck of this approach.

Another powerful tool for low-rank approximation is \emph{fast sketching}, which is widely used to avoid expensive matrix multiplication in dimensionality reduction algorithms \cite{sarlos2006improved,clarkson2017low,woodruff2014sketching}. For this approach, instead of a dense Gaussian matrix $\bOmega$, we use a carefully constructed random matrix $\bS\in\R^{n\times O(k)}$ to compute a so-called sketch $\tilde\bA = \bA\bS$ of the matrix $\bA$, where the associated matrix product can be computed almost as quickly as it takes to read matrix $\bA$, thanks to the structure imposed on $\bS$ (e.g., because $\bS$ is an extremely sparse random matrix \cite{clarkson2017low}). Since $\bA\bS$ can be viewed as a rough version of the power method initialized from $\bS$ (by setting $q=0$), fast sketching has been used to construct range finders and low-rank factorizations under highly constrained computational budgets. A seemingly natural refinement of this would be to combine fast sketching with the power method by computing $(\bA\bA^\top)^q\bA\bS$ for some $q\geq 1$, however unfortunately, subsequent applications of $\bA$ no longer take advantage of fast sketching, thereby largely erasing the benefit of this approach \cite{tropp2023randomized}.

Another natural strategy for accelerating the power method with fast sketching is to replace each application of $\bA$ with an application of a sketch $\tilde\bA$ (while still initializing with a Gaussian matrix $\bOmega$). This leads to the following \emph{sketched power method} for constructing a range finder:
\begin{align}
    \bQ := \mathrm{orth}\big( (\tilde\bA\tilde\bA^\top)^q\tilde\bA \bOmega\big),\qquad\text{where}\quad \tilde\bA = \bA\bS\in\R^{m\times O(l)},\quad\bOmega\in\R^{n\times O(k)}, \quad l\gg k.\label{eq:sketched-power}
\end{align}
This strategy has the potential to be effective as long as the sketch $\tilde \bA$ is sufficiently larger than $\bOmega$ (i.e., $l\gg k$), so that it carries enough information about the spectrum of $\bA$, but also sufficiently smaller than $\bA$ (i.e., $l\ll n$), to reduce the computational cost of repeated matrix multiplications. Crucially, the cost of fast sketching methods is not affected by choosing a very large sketch size, so we can attain this regime essentially for free. The intuition is that, while the sketched power method no longer converges to a perfect alignment with the top-$k$ subspace of $\bA$ as $q$ grows large, it may reach a sufficiently good approximation much faster than the classical power method due to smaller per-iteration cost. Surprisingly, despite the simplicity of this procedure and 
significant prior work on low-rank approximation via both fast sketching \cite{sarlos2006improved,clarkson2017low,woodruff2014sketching,gittens2013revisiting,cohen_nelson_woodruff,nakatsukasa2020fast} and the power method \cite{rokhlin2010randomized,hardt2014noisy,gu2015subspace,tropp2023randomized,chang2026improving,mitliagkas2013memory}, little is known about the effectiveness of sketched power method in low-rank~approximation. See Appendix~\ref{a:related-work} for a detailed discussion of related work.

\subsection{Our contributions}
In this work, we develop a theoretical framework for analyzing the effectiveness of the sketched power method under a variety of fast sketching algorithms, and show how it can be used to improve the efficiency of standard tools for top-$k$ SVD and low-rank factorization. 
\begin{enumerate}
    \item \textbf{Sketch-Powered Range Finder}. We show that  \eqref{eq:sketched-power} yields an effective algorithm for constructing a range finder $\bQ$ whose approximation error is not much larger than if it was computed directly as $\bQ=\orth(\bA\bS)$. Thus, as long as the sketch size is sufficiently larger than the target rank (i.e., $l\gg k$), fast sketching is guaranteed to make the first several steps of the power method more efficient at attaining a strong approximation. 
    \item \textbf{Sketch-Powered Low-Rank Factorization}. The simplest strategy for using the range finder to obtain a low-rank factorization is the Randomized SVD algorithm \cite{halko2011finding}. However, this approach still requires one additional expensive matrix product. We show how to avoid this bottleneck by incorporating \eqref{eq:sketched-power} into another commonly used low-rank factorization technique (generalized Nystr\"om, \cite{nakatsukasa2020fast}), and analyze the resulting approximation guarantee.
    \item \textbf{Sketch-Powered Nystr\"om Approximation}. We further optimize our approach for the task of factorizing positive semidefinite matrices. By incorporating sketched power method into the classical Nystr\"om approximation, we obtain additional computational gains for this task.
\end{enumerate}
In particular, our results yield the following simple and remarkable conclusion (Theorem \ref{thm:generalised_nystrom}): 
\begin{center}
    \emph{Given $\bA\in\R^{n\times n}$ and $k\le O(\sqrt n)$, sketch-powered low-rank factorization takes $\tilde O(n^2)$ time\\ to compute a rank $O(k)$ factorization $\widehat\bA=\bY\bX$ such that $\|\bA - \widehat\bA\|\leq \sqrt{k+1}\cdot \sigma_{k+1}(\bA)$.}
\end{center}
Without sketched power method, the commonly used low-rank factorization schemes either only attain a weaker $\|\bA - \widehat\bA\|= O(\sqrt{n/k})\cdot \sigma_{k+1}(\bA)$ approximation guarantee in the same time budget, or they invoke the classical power method which leads to a higher $\tilde O(n^2k)$ time complexity. We confirm these computational gains empirically by comparing Randomized SVD with and without sketched power method on large matrices from benchmark datasets, as well as similarly evaluating our sketch-powered low-rank factorization~algorithm.

\subsection{Overview of the techniques}
Classical analysis of the power method and range finder construction typically follows one of two common strategies: Either it relies on the special rotational invariance properties of the Gaussian distribution \cite{halko2011finding}, or it builds off of the \emph{subspace embedding} property \cite{cohen_nelson_woodruff}, which is one of the corner-stones of fast sketching analysis \cite{woodruff2014sketching}.  However, neither of these approaches appears directly suitable to the sketched power method, perhaps owing to the fact that it combines two different sources of randomness: a structured sketching matrix $\bS$ and a dense Gaussian initialization $\bOmega$.

We depart from the classical analysis, and instead build on a different property of fast sketching called \emph{regularized spectral approximation}, which holds when a sketch $\tilde\bA=\bA\bS$ satisfies $\tilde\bA\tilde\bA^\top\!+\lambda\bI \approx_{\e} \bA\bA^\top\!+\lambda\bI$ for a suitably chosen regularization parameter $\lambda\geq 0$ and accuracy $\e\in[0,1]$. This property has been used in prior works primarily in the context of sketching for kernel ridge regression or regularized least squares \cite{alaoui2015fast,musco2017recursive,rudi2018fast,garg2025faster}, however it seems less obviously suitable for the analysis of the power method, since there is no explicit regularization occurring in the algorithm.

We first show that a simple argument based on the regularized spectral approximation property can be used to recover classical approximation guarantees for the range finder and the power method, when initialized not only with a Gaussian starting matrix, but also with many fast sketching matrices (Lemma \ref{lem:projspecapprox}). We then extend this argument to the sketched power method, by showing that a subspace approximation guarantee with respect~to~$\tilde\bA$ can be translated to a guarantee with respect~to~$\bA$, up to an additive error that depends only on the regularized spectral approximation quality of $\tilde\bA$ (Theorem~\ref{t:main-technical}).
Finally, in order to turn this approach into a proper low-rank factorization, we further extend the analysis to the generalized Nystr\"om approximation by introducing another fast sketching matrix and  building on sketching guarantees for generalized regression \cite{cohen_nelson_woodruff}.

\paragraph{Notation.} For a matrix $\bA\in\R^{m\times n}$ we use $\|\bA\|$ and $\|\bA\|_F$ to denote the spectral and Frobenius norm, respectively, with $\sigma_i(\bA)$ denoting its $i$th largest singular value and $\bA^\dagger$ denoting the Moore-Penrose pseudoinverse. For a positive semidefinite (psd) matrix $\bM$, we use $\lambda_i(\bM)$ to denote its $i$th largest eigenvalue, and we use $\bM\preceq\mathbf{N}$ to denote the Loewner ordering between symmetric matrices. 
We let $\mathrm{orth}(\bA)$ denote the matrix with orthonormal columns spanning the range of $\bA$.

\section{Main Results}
\label{s:main}
In this section, we incorporate sketched power method into three low-rank approximation algorithms, and provide theoretical guarantees for the running time and approximation error of these methods.

\paragraph{Sketch-Powered Range Finder.} We start with the most basic version of our approach, which is constructing the range finder $\bQ$ as described in Section \ref{alg:column_sketched_iteration}. This is formally given in Algorithm~\ref{alg:column_sketched_iteration}.
\begin{algorithm}[H] 
\caption{Sketch-Powered Range Finder}
\begin{flushleft}
{\bf inputs}:  $\bA \in \mathbb{R}^{m \times n}$, $\bS \in \mathbb{R}^{n \times r_1}$, $\bOmega \in \mathbb{R}^{r_1 \times r_2}$, $q \in \mathbb{N}$

{\bf output}: $\mathbf{Q} \in \mathbb{R}^{m \times n}$
\end{flushleft}
\vspace{-2mm}
\begin{algorithmic}[1]\label{alg:column_sketched_iteration}
\STATE{Compute $\mathbf{Y} := (\tilde\bA \tilde\bA^\top)^q \tilde\bA\bOmega$,\quad where $\tilde\bA = \bA\bS$.}
\STATE{Orthonormalize the columns of $\mathbf{Y}$ to obtain $\mathbf{Q} \in \R^{m\times r_2}$.}
\RETURN{$\bQ$}
\end{algorithmic}
\end{algorithm}
\paragraph{Comparison with Classical Range Finder.} If we let $\bS$ be the identity matrix, then this recovers the classical range finder construction. Here, for simplicity of presentation, we omit the intermediate re-orthonormalization step which is often performed in practice after each iteration of the power method for the sake of numerical stability (this does not affect the output in exact arithmetic). The guarantee for the classical range finder construction with block size $r_2 = O(k)$ can be stated as \cite{halko2011finding}:
\begin{align*}
    \|\bA - \bQ\bQ^\top\bA\|^2 \leq \Big(2\sigma_{k+1}^2(\bA) + \frac1k\sum_{i>k}\sigma_i^2(\bA)\Big)^{\frac1{2q+1}}\leq \Big(\frac{\min\{m,n\}}{k}\Big)^{\frac1{2q+1}}\sigma_{k+1}^2(\bA),
\end{align*}
where $\|\bA - \bQ\bQ^\top\bA\|^2$ measures how much of the spectrum of $\bA$ is captured in the  column span~of~$\bQ$. By choosing $q = \tilde O(1/\e)$, we can bound this error by $(1+\e)\sigma_{k+1}^2(\bA)$. However, when $k$ is large this requires expensive matrix multiplication which takes $\tilde O(\nnz(\bA)k/\e)$ time for $\bA$ with $\nnz(\bA)$ non-zero entries, or $\tilde O(mnk/\e)$ for a dense $\bA$. To avoid this cost, one can choose $q=0$ and replace $\bOmega$ with a fast sketching matrix, but then the squared error can be as large as $\frac {\min\{m,n\}}k\sigma_{k+1}^2(\bA)$.

\paragraph{Our result.} By introducing fast sketching directly into the power method, we can substantially optimize this trade-off. In particular, relying on sparse random sketching matrices $\bS$ \cite{clarkson2017low,nelson2013osnap,chenakkod2025optimal}, we obtain the following guarantee for Sketch-Powered Range Finder (see Section~\ref{s:analysis} for details).
\begin{theorem}\label{thm:column_sketched_iteration}
    Given matrix $\bA \in \mathbb{R}^{m \times n}$, $1\leq k\leq l\leq \min\{m,n\}$ and $\e\in(0,1/2]$, Algorithm~\ref{alg:column_sketched_iteration} implemented using a sparse matrix $\bS$ of size $r_1=\tilde O(l/\e^2)$, a Gaussian matrix $\bOmega$ of size $r_2 = O(k)$, and $q=\tilde O(1/\e)$, in time $\tilde O(\nnz(\bA)/\e + mlk/\e^3)$ computes $\bQ\in\R^{m\times O(k)}$  such that
        \begin{align*}
        \norm{\bA - \bQ\bQ^\top\bA}^2 &\le \paren*{1 + \e}  \sigma_{k+1}^2(\bA) + \frac{\varepsilon}{l} \sum_{i>l} \sigma^2_{i}(\bA).
    \end{align*}
\end{theorem}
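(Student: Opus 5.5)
The plan is to split the argument into a deterministic transfer step and two probabilistic ingredients, exactly as suggested in the overview. First, I will use the regularized spectral approximation property of sparse sketches. With $\lambda := \frac{1}{l}\sum_{i>l}\sigma_i^2(\bA)$ and $r_1 = \tilde O(l/\e^2)$ (the statistical dimension of $\bA\bA^\top$ at level $\lambda$ is at most $2l$), a sparse embedding $\bS$ satisfies, with high probability,
\[
(1-\e)(\bA\bA^\top+\lambda\bI)\preceq \tilde\bA\tilde\bA^\top+\lambda\bI\preceq(1+\e)(\bA\bA^\top+\lambda\bI).
\]
I will take this from the cited results on sparse sketching. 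Second, I will apply the classical Gaussian power-method guarantee to $\tilde\bA$ itself, treating $\bS$ as fixed and conditioning on it. With $r_2=O(k)$ and $q=\tilde O(1/\e)$, the output $\bQ$ satisfies
\[
\norm{\tilde\bA-\bQ\bQ^\top\tilde\bA}^2\le(1+\e)\,\sigma_{k+1}^2(\tilde\bA).
\]
This holds because $(\min\{m,r_1\}/k)^{1/(2q+1)}\le 1+\e$ once $q\gtrsim \log(n)/\e$. Alternatively, this step can be obtained from Lemma~\ref{lem:projspecapprox}.

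The key step is the transfer from $\tilde\bA$ to $\bA$; this is the content of Theorem~\ref{t:main-technical}, which I would invoke or reprove as follows. Set $\bP=\bI-\bQ\bQ^\top$. Then
\[
\norm{\bP\bA}^2=\norm{\bP\bA\bA^\top\bP}\le\norm{\bP(\bA\bA^\top+\lambda\bI)\bP}\le \tfrac1{1-\e}\norm{\bP(\tilde\bA\tilde\bA^\top+\lambda\bI)\bP}\le\tfrac1{1-\e}\bigl(\norm{\bP\tilde\bA}^2+\lambda\bigr).
\]
Here I use that conjugation by $\bP$ preserves the Loewner order. Next, Weyl/Courant–Fischer applied to the upper Loewner bound gives
\[
\sigma_{k+1}^2(\tilde\bA)+\lambda\le(1+\e)\bigl(\sigma_{k+1}^2(\bA)+\lambda\bigr).
\]
Combining the two displays yields
\[
\norm{\bP\bA}^2\le\tfrac{(1+\e)^2}{1-\e}\,\sigma_{k+1}^2(\bA)+O(1)\cdot\lambda.
\]
This is close, but the additive term is $O(\lambda)$, not $\e\lambda$. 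The main obstacle is therefore making the regularization contribution $\e$-small. My first fix is to choose the regularization level as $\lambda'=\e\lambda=\frac{\e}{l}\sum_{i>l}\sigma_i^2$. The statistical dimension at level $\lambda'$ is at most $l/\e + l$, since the number of $\sigma_i^2\ge\lambda'$ is at most $l+l/\e$ and the tail contributes at most $l/\e$. The sketch size then becomes $\tilde O(l/\e^3)$, which is slightly off. To recover $r_1=\tilde O(l/\e^2)$, I would instead use a constant-accuracy approximation at level $\e\lambda$, with size $\tilde O(l/\e)$, for the $\lambda$ term, and $\e$-accuracy only in the directions carrying $\sigma_{k+1}$. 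Alternatively, I would exploit that the $\lambda$ term enters as $\lambda\cdot\e$-perturbation: write
\[
\tilde\bA\tilde\bA^\top-\bA\bA^\top \preceq \e(\bA\bA^\top+\lambda\bI),
\]
so that $\norm{\bP\bA}^2\ge\norm{\bP\tilde\bA}^2-\e(\norm{\bP\bA}^2+\lambda)$. Rearranging gives
\[
\norm{\bP\bA}^2\le\tfrac{1}{1-\e}\bigl(\norm{\bP\tilde\bA}^2+\e\lambda\bigr)
\]
and similarly $\sigma_{k+1}^2(\tilde\bA)\le(1+\e)\sigma_{k+1}^2(\bA)+\e\lambda$. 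This gives the $\e\lambda$ additive term directly with $r_1=\tilde O(l/\e^2)$, and rescaling $\e$ by a constant finishes the error bound.

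Finally, for the running time: computing $\tilde\bA$ costs $\tilde O(\nnz(\bA)/\e)$, using sparsity $\tilde O(1/\e)$ per column of the sparse sketch. Each of the $2q+1=\tilde O(1/\e)$ multiplications by $\tilde\bA$ or $\tilde\bA^\top$ on an $O(k)$-column block costs $O(m r_1 k)=\tilde O(mlk/\e^2)$, for a total of $\tilde O(mlk/\e^3)$. Orthonormalization costs $O(mk^2)$. A union bound over the two failure events completes the proof.
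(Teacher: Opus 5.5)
Your overall plan works, but the derivation of the step you flag as the main obstacle is wrong as written. From $\tilde\bA\tilde\bA^\top-\bA\bA^\top\preceq\e(\bA\bA^\top+\lambda\bI)$, the \emph{upper} half of the sandwich, you correctly get $\norm{\bP\bA}^2\ge\norm{\bP\tilde\bA}^2-\e(\norm{\bP\bA}^2+\lambda)$. But this is a lower bound on $\norm{\bP\bA}^2$: rearranging gives $(1+\e)\norm{\bP\bA}^2\ge\norm{\bP\tilde\bA}^2-\e\lambda$, not the upper bound you state.

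The inequality you want is true, but it comes from the \emph{lower} half. The relation $(1-\e)(\bA\bA^\top+\lambda\bI)\preceq\tilde\bA\tilde\bA^\top+\lambda\bI$ rearranges to $(1-\e)\bA\bA^\top\preceq\tilde\bA\tilde\bA^\top+\e\lambda\bI$. Conjugating by $\bP$ and taking norms gives $\norm{\bP\bA}^2\le\frac{1}{1-\e}(\norm{\bP\tilde\bA}^2+\e\lambda)$, which is exactly the inequality the paper starts from.

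The ``obstacle'' was self-inflicted. In your first display you padded $\bA\bA^\top$ up to $\bA\bA^\top+\lambda\bI$ instead of moving $(1-\e)\lambda\bI$ to the right-hand side. So the first fix (regularizing at level $\e\lambda$ with $r_1=\tilde O(l/\e^3)$, or the hybrid scheme) is unnecessary and should be dropped; level $\lambda$ with $r_1=\tilde O(l/\e^2)$ suffices. Your bound $\sigma_{k+1}^2(\tilde\bA)\le(1+\e)\sigma_{k+1}^2(\bA)+\e\lambda$ is correct, and it does come from the upper half.

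With that repair, your argument is correct and is a modular reorganization of the paper's proof. The paper performs the transfer at the powered level in Theorem~\ref{t:main-technical}:
\begin{itemize}
\item It diagonalizes $\tilde\bA\tilde\bA^\top+\e\lambda\bI$ at a maximizing vector, applies Jensen to the $(2q+1)$-th power, and uses Minkowski to split off $\e\lambda$. This yields $\norm{\bP\bA}^2\le\frac{1}{1-\e}\big(\norm{\bP\bB}^{2/(2q+1)}+\e\lambda\big)$.
\item It bounds $\norm{\bP\bB}^2\le 2\lambda_2$ via Lemma~\ref{lem:projspecapprox}.
\item It converts $\lambda_2$ into singular values of $\bA$ in Lemma~\ref{lem:lambda2bound}.
\end{itemize}
You instead transfer at the $q=0$ level, which is valid for any orthogonal projection $\bP$. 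You then treat the power method on $\tilde\bA$, conditioned on $\bS$, as a black box. The paper's Jensen step is essentially the standard reduction $\norm{\bP\tilde\bA}^{2(2q+1)}\le\norm{\bP\bB}^2$ applied to the shifted matrix, so both routes arrive at the same inequality.

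Your version is more transparent. It also shows that any range finder achieving $(1+\e)\sigma_{k+1}^2(\tilde\bA)$ on the sketch transfers to $\bA$ at additive cost $O(\e\lambda)$. The paper's formulation in terms of $\lambda_1,\lambda_2$ is set up to parallel the Frobenius-norm bound of Theorem~\ref{t:main-technical}. That bound is needed later for the generalized Nystr\"om result, but not for this theorem.
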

\begin{remark} \label{rem:error}
    Due to the presence of the additive error term, it is often better to optimize this bound by choosing a large $l$, rather than a small $\e$.
    For example,  given $k\le O(\sqrt n)$, we can obtain $\|\bA-\bQ\bQ^\top\bA\|\leq \sqrt{k+1}\cdot\sigma_{k+1}(\bA)$ in time $\tilde O(mn)$ with $l = O(n/k)$ and $\e=\Theta(1)$. This is better than the $\sqrt{\min\{m,n\}/k}\cdot\sigma_{k+1}(\bA)$ bound attained by the classical range finder in the same~time.
\end{remark}
\paragraph{Sketch-Powered Low-Rank Factorization.} We can obtain a low-rank factorization from the range finder $\bQ$ by computing $\bB = \bQ^\top\bA$ and returning factors $\bQ$, $\bB$. For instance, this strategy is employed by the Randomized SVD algorithm \cite{halko2011finding} to compute $\bU\bSigma\bV^\top\!\approx\bA$ by performing the SVD of $\bB=\tilde \bU\bSigma\bV^\top$ and computing $\bU=\bQ\tilde\bU$, where $\bSigma$ is diagonal and $\tilde\bU,\bV$ are orthonormal. Thus, the sketched power method can effectively accelerate Randomized SVD, as shown numerically in Section \ref{s:experiments}. Yet, computing $\bB$ still requires one expensive matrix product which can be a bottleneck. 

We get around this issue by incorporating sketched power method into a slightly modified low-rank factorization scheme \cite{nakatsukasa2020fast}, which uses a second fast sketching matrix to effectively approximate the projection of $\bA$ onto the subspace defined by $\bQ$. The procedure is formalized in Algorithm \ref{alg:generalised_nystrom}.
\begin{algorithm}[H]
\caption{Sketch-Powered Low-Rank Factorization}
\begin{flushleft}
{\bf inputs}:  $\bA \in \mathbb{R}^{m \times n}$, $\bS_1 \in \mathbb{R}^{n \times r_1}$, $\bS_2\in \R^{m\times r_1}$, $\bOmega \in \mathbb{R}^{r_1 \times r_2}$ and $q \in \mathbb{N}$

{\bf output}: factors $\bY \in \mathbb{R}^{m \times r_2}$ and $\bX \in \mathbb{R}^{r_2 \times n}$ defining $\widehat\bA = \bY\bX$
\end{flushleft}
\vspace{-2mm}
\begin{algorithmic}[1]\label{alg:generalised_nystrom}
\STATE{Compute $\mathbf{Y} := (\tilde\bA \tilde\bA^\top)^q \tilde\bA\bOmega$,\quad where $\tilde\bA = \bA\bS_1$.}
\STATE{Compute $\bX := (\bS_2^\top \bY)^\dagger (\bS_2^\top \bA)$.}
\RETURN{$\bY,\bX$}
\end{algorithmic}
\end{algorithm}

\begin{theorem}\label{thm:generalised_nystrom}
Given matrix $\bA \in \mathbb{R}^{m \times n}$, $1\leq k\leq l\leq \min\{m,n\}$ and $\e\in(0,1/2]$, Algorithm~\ref{alg:generalised_nystrom} implemented using sparse matrices $\bS_1,\bS_2$ of size $r_1=\tilde O(l/\epsilon^2)$, a Gaussian matrix $\bOmega$ of size $r_2 = O(k)$, and $q=\tilde O(1/\e)$, in time $\tilde O(\nnz(\bA)/\e + mlk/\e^3 + nlk/\e^2)$ computes $\bY\in\R^{m\times O(k)}$ and $\bX\in\R^{O(k)\times n}$  such that
\begin{align*}
\norm{\bA - \bY\bX}^2 &\le \paren*{1 + \e}  \sigma_{k+1}^2(\bA) + \frac{\varepsilon}{l} \sum_{i>l} \sigma^2_{i}(\bA).
\end{align*}
\end{theorem}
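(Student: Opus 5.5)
The plan is to reduce Theorem~\ref{thm:generalised_nystrom} to Theorem~\ref{thm:column_sketched_iteration}, combined with a generalized-regression sketching guarantee for the $\bS_2$ step. Let $\bQ=\orth(\bY)$ be the range finder produced in line~1, which is exactly Algorithm~\ref{alg:column_sketched_iteration} with $\bS=\bS_1$. Theorem~\ref{thm:column_sketched_iteration} then gives, with constant probability,
\begin{align*}
\norm{\bA-\bQ\bQ^\top\bA}^2\le(1+\e)\sigma_{k+1}^2(\bA)+\frac{\e}{l}\sum_{i>l}\sigma_i^2(\bA).
\end{align*}
Since $\bY\bX=\bQ\,(\bS_2^\top\bQ)^\dagger\bS_2^\top\bA$ whenever $\bS_2^\top\bQ$ has full column rank, the output is an oblique projection of $\bA$ onto $\mathrm{range}(\bQ)$. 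It therefore suffices to compare $\bA-\bQ(\bS_2^\top\bQ)^\dagger\bS_2^\top\bA$ with $\bA-\bQ\bQ^\top\bA$.

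\textbf{Step 1: exact decomposition.} Write $\bA=\bQ\bQ^\top\bA+\bP^\perp\bA$ with $\bP^\perp=\bI-\bQ\bQ^\top$. Because $(\bS_2^\top\bQ)^\dagger\bS_2^\top\bQ=\bI$, we get
\begin{align*}
\bA-\bY\bX=\bP^\perp\bA-\bQ(\bS_2^\top\bQ)^\dagger\bS_2^\top\bP^\perp\bA.
\end{align*}
The two terms have orthogonal column spaces. Hence
\begin{align*}
\norm{\bA-\bY\bX}^2\le\norm{\bP^\perp\bA}^2+\norm{(\bS_2^\top\bQ)^\dagger\bS_2^\top\bP^\perp\bA}^2.
\end{align*}

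\textbf{Step 2: bound the cross term.} This is a sketched generalized regression problem in the sense of \cite{cohen_nelson_woodruff}. We take $\bS_2$ to be a sparse subspace embedding for $\mathrm{range}(\bQ)$, which has only $O(k)$ dimensions, and also to satisfy spectral-norm approximate matrix multiplication with respect to the residual $\bP^\perp\bA$. With sketch size $\tilde O(l/\e^2)$ this yields
\begin{align*}
\norm{\bQ^\top\bS_2\bS_2^\top\bP^\perp\bA}^2\lesssim\frac{\e^2 k}{l}\Big(\norm{\bP^\perp\bA}^2+\frac1{l}\norm{\bP^\perp\bA}_F^2\Big).
\end{align*}
Note that $\bQ^\top\bP^\perp\bA=0$. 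Combining with $\sigma_{\min}(\bS_2^\top\bQ)\ge1/2$, the cross term is at most $\e\norm{\bP^\perp\bA}^2+\frac{\e}{l}\norm{\bP^\perp\bA}_F^2$.

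This is the main obstacle. The residual's Frobenius norm is not automatically controlled by $\sum_{i>l}\sigma_i^2$; naively it is only bounded by $\sum_{i>k}\sigma_i^2$. To fix this I would apply the spectral-AMM guarantee in its stable-rank form at scale $\lambda=\frac1l\sum_{i>l}\sigma_i^2(\bA)$, and split $\bP^\perp\bA$ into its top-$l$ part and tail. The top-$l$ part is at most rank $l$ with spectral norm at most $\norm{\bP^\perp\bA}$. The tail has Frobenius norm at most $\sum_{i>l}\sigma_i^2(\bA)$, because projecting can only decrease the tail singular values. This gives a cross-term bound of $O(\e)\big(\norm{\bP^\perp\bA}^2+\frac1l\sum_{i>l}\sigma_i^2\big)$. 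Rescaling $\e$ by a constant and plugging into Step~1 together with the range-finder bound yields the claimed inequality, since $(1+\e)^2\le 1+3\e$.

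\textbf{Step 3: running time.} Computing $\bA\bS_1$ and $\bS_2^\top\bA$ with sparse sketches costs $\tilde O(\nnz(\bA)/\e)$, since the sparsity per column is $\tilde O(1/\e)$. Line~1 costs $\tilde O(mlk/\e^3)$ as in Theorem~\ref{thm:column_sketched_iteration}. Forming $\bS_2^\top\bY$ costs $\tilde O(mk)$. The pseudoinverse of the $r_1\times O(k)$ matrix costs $\tilde O(lk^2/\e^2)$. Multiplying by the $r_1\times n$ matrix $\bS_2^\top\bA$ costs $\tilde O(nlk/\e^2)$. A union bound over the failure events of $\bS_1$, $\bOmega$ and $\bS_2$ completes the argument.
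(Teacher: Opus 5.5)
Your proof is correct, but it handles the Frobenius term differently from the paper.

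\textbf{The paper's route.} It bounds the $\frac{\e^2}{l}\norm{\bP^\perp\bA}_F^2$ term of Lemma~\ref{lem:genreg} using the Frobenius bound \eqref{eq:main-technical-frob} of Theorem~\ref{t:main-technical} with $r=l$. That bound is proved separately in Appendix~\ref{sec:frob_norm_proof}, via the rescaled matrix $\bBt$ and the threshold $\theta^2=\lambda_1+\lambda_2^{1/(2q+1)}$. This is why the paper says the theorem does not follow from Theorem~\ref{thm:column_sketched_iteration}.

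\textbf{Your route.} You use $\sigma_i(\bP^\perp\bA)\le\sigma_i(\bA)$ to get $\norm{\bP^\perp\bA}_F^2\le l\norm{\bP^\perp\bA}^2+\sum_{i>l}\sigma_i^2(\bA)$. This turns the Frobenius term into the spectral residual plus $\frac1l\sum_{i>l}\sigma_i^2(\bA)$, so Theorem~\ref{thm:column_sketched_iteration} can be used as a black box.

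\textbf{Comparison.} Your route is strictly simpler. Combined with \eqref{eq:main-technical-spec}, the same inequality reproduces \eqref{eq:main-technical-frob} for every $r$ with better constants: at most $4$ on $\lambda_2^{1/(2q+1)}$ and $1$ on $\lambda_1$, instead of $8$. So Appendix~\ref{sec:frob_norm_proof} is not needed for this theorem, and the paper's route offers no quantitative advantage here, only modularity.

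\textbf{Smaller remarks.} Your Step~2 is the proof of Lemma~\ref{lem:genreg} written inline: a subspace embedding for $\mathrm{range}(\bQ)$ plus AMM, using $\bQ^\top\bP^\perp\bA=0$. You could simply cite it. The mention of a ``stable-rank form at scale $\lambda$'' is also superfluous, since your inequality plugs directly into the standard AMM bound with parameter $l$.

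One slip: the prefactor $\frac{\e^2 k}{l}$ is unjustified. Since $\norm{\bQ}^2+\norm{\bQ}_F^2/l=1+O(k/l)$, AMM only gives $O(\e^2)\big(\norm{\bP^\perp\bA}^2+\frac1l\norm{\bP^\perp\bA}_F^2\big)$. Your next sentence only uses this weaker form, so nothing breaks.
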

\begin{remark}
    In particular, given an $n\times n$ matrix $\bA$ and $k\le O(\sqrt n)$,
    we can find $\bY,\bX^\top\in\R^{n\times O(k)}$ such that 
    $\|\bA-\bY\bX\|\leq \sqrt{k+1}\cdot \sigma_{k+1}(\bA)$ in time $\tilde O(n^2)$ by using $l = O(n/k)$ and $\e=\Theta(1)$.
\end{remark}

\paragraph{Sketch-Powered Nystr\"om Approximation.} When the matrix $\bA$ is symmetric positive semidefinite (psd), then it is natural to seek a low-rank factorization that is also psd. A standard approach for doing this is the so-called Nystr\"om approximation \cite{williams2000using,gittens2013revisiting,alaoui2015fast,musco2017recursive,rudi2018fast}, where we use a sketching matrix $\bS$ to construct factors $\bC=\bA\bS$ and $\bW=\bS^\top\bA\bS$ such that $\widehat\bA=\bC\bW^\dagger\bC^\top$ forms a low-rank approximation of $\bA$. In  Algorithm \ref{alg:nystrom}, we show how to incorporate sketched power method into this procedure by relying on a large intermediate Nystr\"om approximation $\tilde\bC,\tilde\bW$ to construct a smaller (but still accurate) one. An added computational advantage of this construction is that the power method is performed on the matrix $\tilde\bW$ which has both dimensions reduced rather than just one.
\begin{algorithm}[H]
\caption{Sketch-Powered Nystr\"om Approximation}
\begin{flushleft}
{\bf inputs}:  psd $\bA \in \mathbb{R}^{n \times n}$, $\bS \in \mathbb{R}^{n \times r_1}$, $\bOmega\in \mathbb{R}^{r_1 \times r_2}$, $q \in \mathbb{N}$

{\bf output}: factors $\mathbf{C} \in \mathbb{R}^{n \times r_2}$ and $\mathbf{W} \in \mathbb{R}^{r_2 \times r_2}$ defining $\widehat{\bA} = \mathbf{C}\mathbf{W}^\dagger\mathbf{C}^\top$
\end{flushleft}
\vspace{-2mm}
\begin{algorithmic}[1]\label{alg:nystrom}
\STATE{Compute $\tilde\bC := \bA\bS$ and $\tilde\bW := \bS^\top\bA\bS$.}
\STATE{Compute $\bY := \tilde\bW^q\bOmega$.}
\STATE{Compute $\mathbf{C} := \tilde\bC\bY$ and $\mathbf{W} := \bY^\top\tilde\bW\bY$.}
\RETURN{$\mathbf{C},\mathbf{W}$}
\end{algorithmic}
\end{algorithm}

\begin{theorem}\label{thm:sketched_power_nystrom}
Given a psd matrix $\bA \in \mathbb{R}^{n \times n}$, $1\leq k\leq l\leq n$ and $\e\in(0,1/2]$, Algorithm~\ref{alg:nystrom} implemented using a sparse matrix $\bS$ of size $r_1=\tilde O(l/\epsilon^2)$, a Gaussian matrix $\bOmega$ of size $r_2 = O(k)$, and $q=\tilde O(1/\e)$, in time $\tilde O(\nnz(\bA)/\e + nlk/\e^2 + l^2k/\e^5)$ computes $\bC\in\R^{n\times O(k)}$ and a psd matrix $\bW\in\R^{O(k)\times O(k)}$  such that
\begin{align*}
\norm{\bA - \bC\bW^\dagger\bC^\top} &\le \paren*{1 + \e}  \lambda_{k+1}(\bA) + \frac{\varepsilon}{l} \sum_{i>l} \lambda_{i}(\bA).
\end{align*}
\end{theorem}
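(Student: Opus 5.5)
The plan is to reduce Theorem~\ref{thm:sketched_power_nystrom} to the range-finder analysis by writing everything in terms of the psd square root. Let $\bB := \bA^{1/2}$, so $\bA = \bB\bB^\top$. Set $\tilde\bB := \bB\bS$, which gives $\tilde\bW = \tilde\bB^\top\tilde\bB$ and $\tilde\bC = \bB\tilde\bB$.

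Then
\[
\bC = \bB\tilde\bB\bY = \bB\,\tilde\bB\,(\tilde\bB^\top\tilde\bB)^q\bOmega = \bB\,(\tilde\bB\tilde\bB^\top)^q\,\tilde\bB\bOmega =: \bB\bZ,
\]
where $\bZ := (\tilde\bB\tilde\bB^\top)^q\tilde\bB\bOmega$ is exactly the matrix $\bY$ that Algorithm~\ref{alg:column_sketched_iteration} would produce on input $\bB$ with sketch $\bS$. Likewise
\[
\bW = \bY^\top\tilde\bB^\top\tilde\bB\bY = \bZ^\top\bB^\top\bB\bZ\cdot(\text{rewritten}) = \bC^\top\bA^{\dagger}\bC
\]
on the range, or more directly $\bW = (\tilde\bB\bY)^\top(\tilde\bB\bY)$ with $\tilde\bB\bY = \bZ$ up to the $\bB^\top\bB$ factor. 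I will verify the precise identity carefully: $\bW = \bY^\top\bS^\top\bB^\top\bB\bS\bY = \bM^\top\bM$ with $\bM := \bB^\top\bB\bS\bY = \bB^\top\bZ'$, where $\bZ' := \bB\bS\bY = \bZ$. Hence $\bC = \bB\bB^\top\bZ$ and $\bW = \bZ^\top\bB\bB^\top\bZ$. (I will recheck whether the factor is $\bB$ or $\bB\bB^\top$; for symmetric $\bB$ they coincide with $\bA^{1/2}$, which is all that matters.)

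The standard Nyström identity then gives
\[
\bC\bW^\dagger\bC^\top = \bB\,\bP\,\bB,
\]
where $\bP$ is the orthogonal projection onto the range of $\bB\bZ$. Therefore
\[
\bA - \bC\bW^\dagger\bC^\top = \bB(\bI-\bP)\bB,
\qquad
\norm{\bA - \bC\bW^\dagger\bC^\top} = \norm{(\bI-\bP)\bB}^2 .
\]
This is precisely the range-finder error for $\bB$, but with $\bQ = \orth(\bB\bZ)$, which amounts to one extra power iteration with $\bB$ itself applied at the end.

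To bound it, I will use Theorem~\ref{t:main-technical} together with Lemma~\ref{lem:projspecapprox}, as in the proof of Theorem~\ref{thm:column_sketched_iteration}. The sparse $\bS$ of size $r_1 = \tilde O(l/\e^2)$ gives regularized spectral approximation $\tilde\bB\tilde\bB^\top + \lambda\bI \approx_\e \bB\bB^\top + \lambda\bI$ with $\lambda = \frac{1}{l}\sum_{i>l}\sigma_i^2(\bB)$.

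The main obstacle is the final multiplication by $\bB$: the range is $\bB\bZ$ rather than $\bZ$. I expect to handle this through the monotonicity fact that, for any $\bZ$, $\norm{(\bI-\bP_{\bB\bZ})\bB}\le\norm{(\bI-\bP_{\bZ})\bB}$. This follows by applying the inequality $\norm{(\bI-\bP_{\bB\bZ})\bB}^{2}\le\norm{(\bI-\bP_{\bZ})\bB}^{2}$ for symmetric psd $\bB$, which I would prove via the power-method comparison lemma $\norm{(\bI-\bP_{\bM\bZ})\bM}\le\norm{(\bI-\bP_{\bZ})\bM}$ (Halko et al.). If that fails, I would instead rerun the argument of Theorem~\ref{t:main-technical} directly on the mixed product.

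Once this holds, Theorem~\ref{thm:column_sketched_iteration} applied to $\bB$ yields
\[
\norm{(\bI-\bP)\bB}^2 \le (1+\e)\sigma_{k+1}^2(\bB) + \frac{\e}{l}\sum_{i>l}\sigma_i^2(\bB).
\]
Since $\sigma_i^2(\bB) = \lambda_i(\bA)$, this is the claimed bound. The psd-ness of $\bW$ is immediate from its form $\bY^\top\tilde\bW\bY$. Importantly, $\bB$ is never formed: it appears only in the analysis.

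Runtime is counted step by step:
\begin{itemize}
\item computing $\tilde\bC$ and $\tilde\bW$ costs $\tilde O(\nnz(\bA)/\e + nr_1\cdot\mathrm{polylog})$ for sparse $\bS$ with $\tilde O(1/\e)$ nonzeros per column;
\item the $q$ products with $\tilde\bW\in\R^{r_1\times r_1}$ cost $q r_1^2 k = \tilde O(l^2k/\e^5)$;
\item forming $\bC = \tilde\bC\bY$ costs $n r_1 k = \tilde O(nlk/\e^2)$.
\end{itemize}
Altogether this matches the stated bound.
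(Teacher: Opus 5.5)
Your reduction is the paper's: set $\bB=\bA^{1/2}$, recognize $\mathbf{Z}=(\tilde\bB\tilde\bB^\top)^q\tilde\bB\bOmega=\tilde\bB\bY$ as the output of Algorithm~\ref{alg:column_sketched_iteration} on $\bB$, and invoke Theorem~\ref{thm:column_sketched_iteration}. However, your computation of $\bW$ goes wrong, and it produces a false identity. We have $\bW=\bY^\top\bS^\top\bB\bB\bS\bY$, so the correct factor is $\bM=\bB\bS\bY=\mathbf{Z}$, not $\bB^\top\bB\bS\bY$. Hence $\bW=\mathbf{Z}^\top\mathbf{Z}$, and $\bC=\bB\mathbf{Z}$ exactly as in your first display, not $\bB\bB^\top\mathbf{Z}=\bA\mathbf{Z}$. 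Note that $\bB$ and $\bB\bB^\top$ are $\bA^{1/2}$ and $\bA$, which do not coincide. Therefore $\bC\bW^\dagger\bC^\top=\bB\mathbf{Z}(\mathbf{Z}^\top\mathbf{Z})^\dagger\mathbf{Z}^\top\bB=\bB\bP_{\mathbf{Z}}\bB$, where $\bP_{\mathbf{Z}}$ projects onto the range of $\mathbf{Z}$. This is precisely $\bQ\bQ^\top$ for Algorithm~\ref{alg:column_sketched_iteration} run on $\bA^{1/2}$. Your claimed identity, with the projection onto the range of $\bB\mathbf{Z}$, is false in general. For example, take $\bA=\mathrm{diag}(4,1)$, $\bS=\bI$, $q=0$, $\bOmega=(1/2,1)^\top$. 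Then $\mathbf{Z}=(1,1)^\top$, the true error is $5/2$, and your expression gives $8/5$.

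So your ``main obstacle'' is an artifact. We have $\|\bA-\bC\bW^\dagger\bC^\top\|=\|(\bI-\bP_{\mathbf{Z}})\bB\|^2$, and Theorem~\ref{thm:column_sketched_iteration} applies verbatim with $\sigma_i^2(\bA^{1/2})=\lambda_i(\bA)$; that is the paper's entire proof. As written, your chain would not bound the true error. The monotonicity $\|(\bI-\bP_{\bB\mathbf{Z}})\bB\|\le\|(\bI-\bP_{\mathbf{Z}})\bB\|$ is indeed true for symmetric $\bB$: since $(\bI-\bP_{\bB\mathbf{Z}})\bB\bP_{\mathbf{Z}}=0$, we get $(\bI-\bP_{\bB\mathbf{Z}})\bB=(\bI-\bP_{\bB\mathbf{Z}})\bB(\bI-\bP_{\mathbf{Z}})$. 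But it shows that the quantity you identified as the error is \emph{no larger} than the actual error, so bounding it would prove nothing. Your conclusion survives only because the actual error happens to be the middle term of your chain; drop the detour and the argument is correct.

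One minor runtime point: with $s=\tilde O(1/\e)$ nonzeros per \emph{row} of $\bS$ (the structure Lemma~\ref{lem:specapprox} uses), forming $\tilde\bW=\bS^\top\tilde\bC$ costs $O(nr_1s)=\tilde O(nl/\e^3)$, not $\tilde O(nr_1)$. The paper records this same cost.
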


\section{Analysis via Regularized Spectral Approximation}
\label{s:analysis}
Our analysis relies on the following approximation property for a sketch, which has been previously used for low-rank approximation in certain specialized settings such as kernel ridge regression \cite{alaoui2015fast,musco2017recursive,rudi2018fast}, but is not typically associated with guarantees for range finder construction.
\begin{definition}
 Let $\bA\in\R^{m\times n}$, $\lambda\geq 0$, and $\e\in[0,1/2]$. For a matrix $\bS \in \R^{n \times r}$, we say that $\bA\bS$ is a $\lambda$-regularized $\e$-spectral approximation of $\bA$ if
    \[
            \paren*{1-\varepsilon}\paren*{\bA \bA^\top + \lambda\bI}
            \preceq
            \bA \bS \bS^\top \bA^\top + \lambda\bI
            \preceq
            \paren*{1+\varepsilon}\paren*{\bA \bA^\top + \lambda\bI}.
    \]
\end{definition}
This property can be satisfied by most standard sketching matrices with an appropriate choice of dimension $r$ and regularizer $\lambda$, as shown in the following lemma which is an immediate consequence of \cite[Lemma 12]{derezinskisidford} combined with the discussion in \cite[Sections 2.1.1-2.1.3]{cohen_nelson_woodruff} (see Appendix~\ref{subsec:ammproofs}).
\begin{lemma}[\cite{cohen_nelson_woodruff,derezinskisidford}]\label{lem:specapprox}
    Given $\bA\in\R^{m\times n}$, $\e,\delta\in(0,1)$, and $1\leq k\leq \mathrm{rank}(\bA)$, consider:
    \begin{align}
        \lambda = \frac1k\sum_{i>k}\sigma_i^2(\bA).\label{eq:lambda}
    \end{align}
    The following distributions over a sketching matrix $\bS\in\R^{n\times r}$ have the property that $\bA\bS$ is a $\lambda$-regularized $\e$-spectral approximation of $\bA$ with probability $1-\delta$:
    \begin{enumerate}
        \item $\bS$ has independent sub-Gaussian entries scaled by $1/\sqrt r$, for example  $\bS_{ij}\sim\mathcal{N}(0,1/r)$ or $\bS_{ij}\sim\pm1/\sqrt r$, and the sketch size satisfies $r\geq O((k+\log(1/\delta))/\e^2)$.
        \item $\bS$ is a CountSketch-type matrix with $s$ random non-zero entries per row of the form $\pm1/\sqrt s$, and one of the following holds:\vspace{-2mm}
        \begin{enumerate}
            \item $r \geq O((k+\log(1/\e\delta))/\e^2)$ and $s\geq O(\log^2(k/\delta)/\e + \log^3(k/\delta))$;
            \item $r\geq O(k\log(k/\delta)/\e^2)$ and $s\geq O(\log(k/\delta)/\e)$.
        \end{enumerate}
        \item $\bS$ is a Subsampled Randomized Hadamard Transform, i.e., $\bS=\frac1{\sqrt r}\bD\mathbf{H}\bI_{:,S}$, where $\bD$ is diagonal with random $\pm1$ entries, $\mathbf{H}$ is a Hadamard matrix, and $S$ is a uniformly random subset of $\{1,...,n\}$ with size $r\geq O((k+\log(n/\delta))\log(k/\delta)/\e^2)$.
    \end{enumerate}
\end{lemma}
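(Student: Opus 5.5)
The plan is to reduce the regularized spectral approximation property to two standard sketching guarantees, approximate matrix multiplication (AMM) and a subspace embedding for the top-$k$ singular subspace, and then cite these for each distribution. Write the SVD $\bA = \bU\bSigma\bV^\top$ and split $\bA = \bA_k + \bA_{\setminus k}$, where $\bA_k$ is the best rank-$k$ part. The target inequality
\[
-\e(\bA\bA^\top+\lambda\bI)\preceq \bA\bS\bS^\top\bA^\top-\bA\bA^\top\preceq \e(\bA\bA^\top+\lambda\bI)
\]
is equivalent to
\[
\big\|(\bA\bA^\top+\lambda\bI)^{-1/2}\bA(\bS\bS^\top-\bI)\bA^\top(\bA\bA^\top+\lambda\bI)^{-1/2}\big\|\le\e .
\]
So I set $\bB=(\bA\bA^\top+\lambda\bI)^{-1/2}\bA$ and need $\|\bB\bS\bS^\top\bB^\top-\bB\bB^\top\|\le\e$.

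The structure of $\bB$ drives the argument. Its singular values are $\sigma_i/\sqrt{\sigma_i^2+\lambda}\le 1$. Its stable rank, or statistical dimension, is
\[
\|\bB\|_F^2=\sum_i \frac{\sigma_i^2}{\sigma_i^2+\lambda}\le k+\frac{1}{\lambda}\sum_{i>k}\sigma_i^2 = 2k,
\]
and this is exactly where the choice \eqref{eq:lambda} of $\lambda$ enters. So $\bB$ is a matrix with spectral norm at most $1$ and $\|\bB\|_F^2\le 2k$.

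The key step is then the statement of \cite[Lemma 12]{derezinskisidford}, as I recall it. Any sketch that is an $(\e,\delta)$ oblivious subspace embedding for dimension $O(k)$ also satisfies a spectral-norm approximate matrix multiplication guarantee. Specifically, $\|\bB\bS\bS^\top\bB^\top-\bB\bB^\top\|\le\e(\|\bB\|^2+\|\bB\|_F^2/k)$, which is $O(\e)$ here. I would invoke this either directly or via the Cohen–Nelson–Woodruff "OSE moment implies spectral AMM" statement. Rescaling $\e$ by a constant then finishes the reduction.

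What remains is to read off, for each listed distribution, the sketch size and sparsity at which it is an OSE for $O(k)$-dimensional subspaces with the required moment property. I would take these from \cite[Sections 2.1.1--2.1.3]{cohen_nelson_woodruff}:
\begin{enumerate}
\item Sub-Gaussian matrices need $r=O((k+\log(1/\delta))/\e^2)$.
\item For OSNAP/CountSketch there are two regimes. One has $r=O((k+\log(1/\e\delta))/\e^2)$ with polylogarithmic sparsity. The other has $r=O(k\log(k/\delta)/\e^2)$ with $s=O(\log(k/\delta)/\e)$.
\item SRHT needs $r=O((k+\log(n/\delta))\log(k/\delta)/\e^2)$.
\end{enumerate}
These match the stated conditions, so each case follows by plugging in.

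The main obstacle is making sure the cited AMM guarantee is in the spectral-norm form with the $\|\bB\|_F^2/k$ term, and not just a Frobenius-norm form, and that it holds for each distribution at the stated sparsity. A Frobenius-norm AMM bound would lose a $\sqrt{k}$ factor. If the cited lemma were only stated for OSEs, I would check that the Cohen–Nelson–Woodruff moment bounds provide exactly the needed $(\e,\delta,2k,\ell)$-OSE moment property for each of the three families. One further subtlety is that the statement requires $k\le\mathrm{rank}(\bA)$ so that $\lambda>0$ is well defined. If $\lambda=0$, the claim reduces to an ordinary subspace embedding for the rank-$\le k$ range of $\bA^\top$, which is immediate.
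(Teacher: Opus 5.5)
Your proposal is correct and follows the paper's proof essentially step for step. You normalize by $(\bA\bA^\top+\lambda\bI)^{-1/2}$, use $\|\bC\|\le 1$ and $\|\bC\|_F^2\le 2k$, apply the spectral-norm $(k,\e/3)$-AMM bound that Cohen--Nelson--Woodruff derive from the $(\e,\delta,2k,p)$-OSE moments property (the paper goes through OSE moments rather than plain OSE, as your fallback anticipates), and then read off the sketch sizes from their Sections 2.1.1--2.1.3.
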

We start by showing how the classical guarantees for range finder construction can be easily recovered up to constant factors from the regularized spectral approximation property. We consider this result to be folklore, as similar arguments have appeared in prior work, although in different contexts \cite{derezinskisidford}.
\begin{lemma} \label{lem:projspecapprox}
    Given matrix $\bA\in\R^{m\times n}$ and $\bOmega\in\R^{n\times r}$, let $\bQ=\mathrm{orth}(\bA\bOmega)$. If $\bA\bOmega$ is a $\lambda$-regularized $1/2$-spectral approximation of $\bA$ for some $\lambda \geq 0$, then 
    \begin{align*}
        \|\bA - \bQ\bQ^\top\bA\|^2 \leq 2\lambda\quad\text{and}\quad \|\bA-\bQ\bQ^\top\bA\|_F^2\leq 2\cdot \min_{l}\Big\{  l\lambda + \sum_{i>l}\sigma_i^2(\bA)\Big\}.
    \end{align*}
    In particular, if $\lambda$ is as in \eqref{eq:lambda} for some $1\leq k\leq n$ (e.g., $\bOmega\in\R^{n\times O(k)}$ is a Gaussian matrix), then:
    \begin{align*}
        \|\bA - \bQ\bQ^\top\bA\|^2 \leq \frac2k\sum_{i>k}\sigma_i^2(\bA)\quad\text{and}\quad \|\bA-\bQ\bQ^\top\bA\|_F^2\leq 4\sum_{i>k}\sigma_i^2(\bA).        
    \end{align*}
\end{lemma}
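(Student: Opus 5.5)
Let $\bP = \bQ\bQ^\top$ be the orthogonal projection onto $\mathrm{range}(\bA\bOmega)$ and $\bP^\perp = \bI - \bP$. Since $\|\bA - \bP\bA\|^2 = \|\bP^\perp\bA\bA^\top\bP^\perp\|$ and $\|\bA-\bP\bA\|_F^2 = \tr(\bP^\perp\bA\bA^\top\bP^\perp)$, it suffices to bound the psd matrix $\bP^\perp\bA\bA^\top\bP^\perp$ in the Loewner order.

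\textbf{Step 1 (spectral bound).} Conjugate the lower bound of the regularized spectral approximation (with $\e=1/2$) by $\bP^\perp$. Because $\bP^\perp\bA\bOmega = 0$, the sketch term vanishes and we get
\[
\tfrac12\,\bP^\perp(\bA\bA^\top+\lambda\bI)\bP^\perp \preceq \bP^\perp(\bA\bOmega\bOmega^\top\bA^\top + \lambda\bI)\bP^\perp = \lambda\bP^\perp .
\]
Hence $\bP^\perp\bA\bA^\top\bP^\perp \preceq 2\lambda\bP^\perp - \lambda\bP^\perp \preceq \lambda\bP^\perp$. This already gives $\|\bA-\bP\bA\|^2\le\lambda\le 2\lambda$.

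It is better to keep a form that also handles the Frobenius norm. Write $\bM=\bA\bA^\top+\lambda\bI$. The same computation shows $\bP^\perp\bM\bP^\perp\preceq 2\lambda\bP^\perp$.

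\textbf{Step 2 (Frobenius bound).} We bound $\tr(\bP^\perp\bA\bA^\top\bP^\perp)$. Fix $l$, and let $\bA_l$ be the best rank-$l$ approximation of $\bA$, with $\bA\bA^\top = \bA_l\bA_l^\top + \bA_{\setminus l}\bA_{\setminus l}^\top$. Then
\[
\tr(\bP^\perp\bA\bA^\top\bP^\perp) = \tr(\bP^\perp\bA_l\bA_l^\top\bP^\perp) + \tr(\bP^\perp\bA_{\setminus l}\bA_{\setminus l}^\top\bP^\perp).
\]
The second term is at most $\sum_{i>l}\sigma_i^2(\bA)$.

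For the first term, let $\bU_l$ be the top-$l$ left singular vectors of $\bA$. The quantity $\bP^\perp\bA_l\bA_l^\top\bP^\perp$ has rank at most $l$, and from Step 1 its norm is at most $\lambda$. Therefore its trace is at most $l\lambda$.

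Summing gives $\|\bA-\bP\bA\|_F^2 \le l\lambda + \sum_{i>l}\sigma_i^2(\bA)$, which is within the claimed factor of $2$. Minimizing over $l$ finishes the general statement. The generous factor $2$ in the statement suggests the authors used a cruder route, for example the bound $\bP^\perp\bM\bP^\perp\preceq 2\lambda\bP^\perp$ together with a trace bound. The rank argument above is the step I would double-check most carefully. Its key point is that the trace of a psd matrix of rank at most $l$ is at most $l$ times its norm, and this matrix is dominated by $\bP^\perp\bA\bA^\top\bP^\perp\preceq\lambda\bI$.

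\textbf{Step 3 (specialization).} Take $\lambda=\frac1k\sum_{i>k}\sigma_i^2(\bA)$. The spectral bound $2\lambda$ is exactly the first claim. For the Frobenius claim, choose $l=k$: then $2(k\lambda+\sum_{i>k}\sigma_i^2(\bA)) = 4\sum_{i>k}\sigma_i^2(\bA)$. The parenthetical claim about the Gaussian $\bOmega\in\R^{n\times O(k)}$ follows from Lemma~\ref{lem:specapprox}, item~1, with $\e=1/2$.
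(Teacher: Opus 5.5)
Your proof is correct, and it takes a different route from the paper's.

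\textbf{The paper's route.} The paper first inverts the lower Loewner bound to get the resolvent inequality $\bI-\bQ\bQ^\top\preceq\lambda(\bA\bOmega\bOmega^\top\bA^\top+\lambda\bI)^{-1}\preceq 2\lambda(\bA\bA^\top+\lambda\bI)^{-1}$. It then sandwiches this between $\bA^\top$ and $\bA$. For the spectral norm it uses $\|\bA^\top(\bA\bA^\top+\lambda\bI)^{-1}\bA\|\le 1$. For the Frobenius norm it uses the effective-dimension estimate $\tr(\bA^\top(\bA\bA^\top+\lambda\bI)^{-1}\bA)=\sum_i\sigma_i^2(\bA)/(\sigma_i^2(\bA)+\lambda)\le l+\frac1\lambda\sum_{i>l}\sigma_i^2(\bA)$.

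\textbf{Your route.} You sandwich the lower bound itself between copies of $\bI-\bQ\bQ^\top$, use that this projection annihilates $\bA\bOmega$, and cancel the $\lambda(\bI-\bQ\bQ^\top)$ terms. This gives $(\bI-\bQ\bQ^\top)\bA\bA^\top(\bI-\bQ\bQ^\top)\preceq\lambda(\bI-\bQ\bQ^\top)$ directly. The head/tail split with the rank-$l$ trace bound then gives the Frobenius estimate, and each step is sound.

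\textbf{What your route buys.} Both bounds come out a factor of $2$ sharper than stated. Run with a general $\e$, your argument gives $\frac{\e}{1-\e}\lambda$, which vanishes as $\e\to0$, as it should. The gain comes from removing the $\lambda\bI$ part before taking norms or traces. After sandwiching by $\bA$, the paper is left with $2\lambda\,\sigma_i^2(\bA)/(\sigma_i^2(\bA)+\lambda)\approx 2\lambda$ on the top directions. Your argument also never inverts anything, so $\lambda=0$ needs no separate treatment.

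\textbf{What the paper's route buys.} It produces an inequality on the projection alone, which can be tested against arbitrary vectors. This is exactly what the paper reuses in the Frobenius part of Theorem~\ref{t:main-technical}, where the bound for $\bBt$ is evaluated on the singular vectors $\bu_i$ of $\bA$. Your route loses nothing here. For $\lambda>0$, your intermediate inequality $(\bI-\bQ\bQ^\top)(\bA\bA^\top+\lambda\bI)(\bI-\bQ\bQ^\top)\preceq 2\lambda(\bI-\bQ\bQ^\top)$ is equivalent to the paper's resolvent bound. Both say exactly that $\|(\bA\bA^\top+\lambda\bI)^{1/2}(\bI-\bQ\bQ^\top)\|^2\le 2\lambda$.
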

\begin{remark}
    While the above statement corresponds only to the power method with $q=0$, one can easily extend it to $q>0$ by replacing $\bA$ with $(\bA\bA^\top)^q\bA$ and following standard reductions \cite{halko2011finding}.
\end{remark}
\begin{proof}
    Let $\bP_{\bA\bOmega}:=\bQ\bQ^\top$. Using the fact that $\bP_{\bA \bOmega} = \bA\bOmega\bOmega^\top\bA^\top(\bA\bOmega\bOmega^\top\bA^\top)^\dagger$, we have
    \begin{align*}
        \bI - \bP_{\bA \bOmega} &= (\bA\bOmega\bOmega^\top\bA^\top + \lambda\bI)(\bA\bOmega\bOmega^\top\bA^\top + \lambda\bI)^{-1} - \bP_{\bA \bOmega} \\
        &= \bA\bOmega\bOmega^\top\bA^\top(\bA\bOmega\bOmega^\top\bA^\top + \lambda\bI)^{-1} - \bP_{\bA \bOmega} + \lambda(\bA\bOmega\bOmega^\top\bA^\top + \lambda\bI)^{-1} \\
        &\preceq \lambda(\bA\bOmega\bOmega^\top\bA^\top + \lambda\bI)^{-1} 
        \preceq 2\lambda(\bA\bA^\top + \lambda\bI)^{-1}.
    \end{align*}
    where the second to last inequality follows from the fact that $\bA\bOmega\bOmega^\top\bA^\top(\bA\bOmega\bOmega^\top\bA^\top + \lambda\bI)^{-1} \preceq \bP_{\bA \bOmega}$ and the last inequality follows from the regularized spectral approximation guarantee. So,
    \begin{align*}
     \|\bA - \bP_{\bA\bOmega}\bA\|^2 &= \norm{\bA^\top \paren*{\bI - \bP_{\bA \bOmega}} \bA } 
     \le 2\lambda \norm{\bA^\top (\bA\bA^\top + \lambda\bI)^{-1} \bA } \le 2\lambda.
    \end{align*}
    Similarly, we get $\|\bA - \bP_{\bA\bOmega}\bA\|_F^2\leq 2\lambda\tr(\bA^\top (\bA\bA^\top + \lambda\bI)^{-1} \bA)$. Finally, for any $l\geq 0$, we have
    \begin{align*}
        \tr(\bA^\top (\bA\bA^\top + \lambda\bI)^{-1} \bA) = \sum_{i}\frac{\sigma_i^2(\bA)}{\sigma_i^2(\bA)+\lambda}\leq l + \frac1\lambda\sum_{i>l}\sigma_i^2(\bA),
    \end{align*}
    which concludes the proof.
\end{proof}
Our main technical result is essentially an extension of Lemma~\ref{lem:projspecapprox} to the sketched power method. Note that we provide bounds for both the spectral and Frobenius norm error, as both are needed later for Theorem \ref{thm:generalised_nystrom}. We provide the proof sketch for each bound in Sections \ref{subsec:specnormbd} and \ref{subsec:forbnorm}, respectively.
\begin{theorem}\label{t:main-technical}
    Given $\bA\in\R^{m\times n}$, $\bS\in\R^{n\times r_1}$, $\bOmega\in\R^{r_1\times r_2}$, and $q,\lambda_1,\lambda_2\geq 0$, suppose that:
    \begin{enumerate}
        \item $\bA\bS$ is a $\lambda_1$-regularized $\e$-spectral approximation of $\bA$ for some $\e\in[0,1/2]$;
        \item $\bB\bOmega$ is a $\lambda_2$-regularized $1/2$-spectral approximation of $\bB:=(\bA\bS(\bA\bS)^\top)^q\bA\bS$ .
    \end{enumerate}
    Then, $\bQ=\mathrm{orth}(\bB\bOmega)$ satisfies:
    \begin{align}
        \|\bA - \bQ\bQ^\top\bA\|^2 &\leq \paren*{1+2\e} \paren*{ \paren*{ 2\lambda_2 }^{\frac{1}{2q+1}}  + \varepsilon\lambda_1  }, \label{eq:main-technical-spec}\\
        \|\bA - \bQ\bQ^\top\bA\|_F^2 &\leq \min_r \Big\{ 8r\big(\lambda_2^{\frac{1}{2q+1}} + \lambda_1\big)  + \sum_{i>r} \sigma^2_{i}(\bA) \Big\}. \label{eq:main-technical-frob}
    \end{align}
\end{theorem}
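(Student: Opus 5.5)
Write $\tilde\bA=\bA\bS$, $\bP=\bQ\bQ^\top$ and $\bB=(\tilde\bA\tilde\bA^\top)^q\tilde\bA$. The plan has three steps: bound the error for $\tilde\bA$ using Lemma~\ref{lem:projspecapprox} plus a power-method reduction, then transfer to $\bA$ through assumption~1, and finally obtain the Frobenius bound by splitting off the top-$r$ part.

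\textbf{Step 1: the sketched matrix.} Since $\bB\bOmega$ is a $\lambda_2$-regularized $1/2$-spectral approximation of $\bB$, the first part of the proof of Lemma~\ref{lem:projspecapprox} gives
\[
\bI-\bP\preceq 2\lambda_2(\bB\bB^\top+\lambda_2\bI)^{-1}, \qquad\text{hence}\qquad \|(\bI-\bP)\bB\|^2\le 2\lambda_2 .
\]
Now $\bB\bB^\top=(\tilde\bA\tilde\bA^\top)^{2q+1}$, and $\bI-\bP$ is an orthogonal projection. The standard Halko et al.\ argument (Hölder--McCarthy / Jensen for the operator-monotone map $t\mapsto t^{1/(2q+1)}$) then gives
\[
\|(\bI-\bP)\tilde\bA\|^{2(2q+1)}\le\|(\bI-\bP)(\tilde\bA\tilde\bA^\top)^{2q+1}(\bI-\bP)\|.
\]
Therefore $(\bI-\bP)\tilde\bA\tilde\bA^\top(\bI-\bP)\preceq \mu(\bI-\bP)$ with $\mu:=(2\lambda_2)^{1/(2q+1)}$.

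\textbf{Step 2: transfer to $\bA$.} Conjugate the lower bound of assumption~1 by $\bI-\bP$:
\[
(1-\e)(\bI-\bP)(\bA\bA^\top+\lambda_1\bI)(\bI-\bP)\preceq(\bI-\bP)(\tilde\bA\tilde\bA^\top+\lambda_1\bI)(\bI-\bP)\preceq(\mu+\lambda_1)(\bI-\bP).
\]
Taking norms gives $\|(\bI-\bP)\bA\|^2\le(\mu+\lambda_1)/(1-\e)-\lambda_1=(\mu+\e\lambda_1)/(1-\e)$. Since $1/(1-\e)\le 1+2\e$ for $\e\le1/2$, this yields \eqref{eq:main-technical-spec}.

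\textbf{Step 3: the Frobenius bound.} The argument mirrors the one above, but the trace cannot be handled by a bare norm bound, so we split off the top-$r$ part. Let $\bA_r$ be the best rank-$r$ approximation of $\bA$. Then
\[
\|(\bI-\bP)\bA\|_F^2\le\|(\bI-\bP)\bA_r\|_F^2+\|\bA-\bA_r\|_F^2 ,
\]
since $\bI-\bP$ is a contraction and the column spaces of $\bA_r$ and $\bA-\bA_r$ are orthogonal. The first term is a rank-$r$ quantity, so it is at most $r\|(\bI-\bP)\bA\|^2$. Step~2 bounds this by $r(1+2\e)(\mu+\e\lambda_1)\le 8r(\lambda_2^{1/(2q+1)}+\lambda_1)$, where $2^{1/(2q+1)}\le2$ and $\e\le1/2$ absorb the constants. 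Minimizing over $r$ gives \eqref{eq:main-technical-frob}.

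\textbf{Main obstacle.} The delicate step is Step~1's reduction from $\bB$ to $\tilde\bA$. It must be done with $\bI-\bP$ as a projection onto an arbitrary subspace, not one aligned with $\tilde\bA$'s singular vectors. I would invoke the Halko et al.\ Proposition 8.6 inequality $\|\bP'\bM\|\le\|\bP'(\bM\bM^\top)^{q}\bM\|^{1/(2q+1)}$, valid for any orthogonal projector $\bP'$, with $\bP'=\bI-\bP$ and $\bM=\tilde\bA$.
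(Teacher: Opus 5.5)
Your proof is correct, and it takes a different and shorter route than the paper, most notably for the Frobenius bound.

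\textbf{Spectral bound.} You use the same ingredients as the paper: Lemma~\ref{lem:projspecapprox} for $\|(\bI-\bP)\bB\|^2\le 2\lambda_2$, the Jensen/H\"older--McCarthy power reduction, and the lower half of assumption~1. You apply them in the opposite order. The paper first conjugates $\bA\bA^\top\preceq\frac{1}{1-\e}(\tilde\bA\tilde\bA^\top+\e\lambda_1\bI)$ by the projection. It then applies Jensen to the shifted spectrum $\sigma_i^2(\tilde\bA)+\e\lambda_1$, which needs a Minkowski step to split off the $\e\lambda_1$ term. You instead apply Halko et al.'s Proposition~8.6 to the unshifted $\tilde\bA$ and turn it into $(\bI-\bP)\tilde\bA\tilde\bA^\top(\bI-\bP)\preceq\mu(\bI-\bP)$. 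You then add $\lambda_1$ linearly. This avoids Minkowski and gives the identical constant.

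\textbf{Frobenius bound.} Here the routes genuinely diverge. The paper rescales to $\bBt=\bB/\theta^{2q+1}$ with $\theta^2=\lambda_1+\lambda_2^{1/(2q+1)}$ and uses $\bI-\bP\preceq 2(\bBt\bBt^\top+\bI)^{-1}$. It then controls each $\bu_i^\top(\bI-\bP)\bu_i$ through the eigendecomposition of $\tilde\bA\tilde\bA^\top$ and the regularized approximation at level $\lambda_1'$. You instead note that $\|(\bI-\bP)\bA_r\|_F^2\le r\|(\bI-\bP)\bA_r\|^2\le r\|(\bI-\bP)\bA\|^2$, using that the rank is at most $r$ and $\bA_r=\bA\bV_r\bV_r^\top$. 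Reusing the spectral bound then gives $r(4\lambda_2^{1/(2q+1)}+\lambda_1)$, which is within the stated $8r(\lambda_2^{1/(2q+1)}+\lambda_1)$.

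Because the target has the form $\min_r\{rE+\sum_{i>r}\sigma_i^2(\bA)\}$ with $E$ a spectral-level quantity, this reduction loses nothing. It already gives $\bu_i^\top(\bI-\bP)\bu_i\le\min\{1,\|(\bI-\bP)\bA\|^2/\sigma_i^2(\bA)\}$. That is the same order as the paper's direction-wise bound $O(\lambda_1'/(\sigma_i^2(\bA)+\lambda_1'))$. So the extra machinery the paper introduces is not needed for the theorem as stated, even though the paper remarks that the power method makes the Frobenius case more delicate. Your version also has slightly better constants.

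\textbf{One justification to fix.} The split $\|(\bI-\bP)\bA\|_F^2=\|(\bI-\bP)\bA_r\|_F^2+\|(\bI-\bP)(\bA-\bA_r)\|_F^2$ holds because the \emph{row} spaces of $\bA_r$ and $\bA-\bA_r$ are orthogonal. It is Pythagoras for right multiplication by $\bV_r\bV_r^\top$ and $\bI-\bV_r\bV_r^\top$. Orthogonality of column spaces is not preserved under left multiplication by $\bI-\bP$, so it does not by itself justify dropping the cross term. For the SVD split both orthogonalities hold, so your conclusion is unaffected.
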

Intuitively, by choosing matrix $\bS$ to be sufficiently larger than $\bOmega$, we can ensure that $\lambda_1\ll\lambda_2$, so that performing a few steps of power iteration reduces the overall error until $\lambda_2^{1/(2q+1)}\approx \lambda_1$. Following this intuition and selecting the sketching matrices according to Lemma~\ref{lem:specapprox} yields Theorem \ref{thm:column_sketched_iteration}.
\begin{proof}[Proof of Theorem \ref{thm:column_sketched_iteration}] 
Let $\lambda_1 = \frac1l\sum_{i>l}\sigma_i^2(\bA)$. By Lemma \ref{lem:specapprox}, we can generate an $n \times r_1$ sparse random matrix $\bS$ with $s = O(\log(l)/\e)$ non-zero entries per row such that with high probability $\tilde\bA := \bA\bS$ is a $\lambda_1$-regularized $\e$-spectral approximation of $\bA$ for $r_1 =  O(l\log(l)/\e^2)$, which yields
\begin{align}
    (1-\varepsilon)\paren*{ \sigma_i^2(\bA) + \lambda_1 } \le \sigma_i^2(\tilde\bA) &+ \lambda_1 \le (1+\varepsilon)\paren*{ \sigma_i^2(\bA) + \lambda_1 }. \label{eq:s1specapprox}
\end{align}
Let $\bB=(\tilde\bA\tilde\bA^\top)^q\tilde\bA$ and $\lambda_2 = \frac1k\sum_{i>k}\sigma_i^2(\bB)$. By Lemma \ref{lem:specapprox}, if $\bOmega$ is an $r_1 \times O(k)$ Gaussian matrix, then with high probability $\bB\bOmega$ is a $\lambda_2$-regularized $1/2$-spectral approximation of $\bB$.

To get the stated bound after applying Theorem \ref{t:main-technical}, we express $\paren*{ 2\lambda_2 }^{\frac{1}{(2q+1)}}$ in terms of $\sigma_i^2(\bA)$. Note that, $\sigma_i^2(\bB) = \sigma_i(\tilde\bA)^{2(2q+1)}  \le \paren*{(1+\varepsilon) \sigma_i(\bA)^2  + \lambda_1\varepsilon}^{2q+1}$ by \eqref{eq:s1specapprox}. Thus, $\paren*{ 2\lambda_2 }^{\frac{1}{(2q+1)}}$ is bounded by the $\ell_{2q+1}$ norm of a vector with coordinates $(1+\varepsilon) \sigma_i(\bA)^2  + \lambda_1\varepsilon$ for $i>k$. 
In Lemma~\ref{lem:lambda2bound} we show that for $q = \Omega(\log(m)/\e)$, this $\ell_{2q+1}$ norm is essentially the same as the largest coordinate:
\begin{align*}
    \paren*{ 2\lambda_2 }^{\frac{1}{(2q+1)}} \le (1+4\e)\sigma_{k+1}(\bA)^2 + 2\lambda_1\e.
\end{align*}
Then, by Theorem \ref{t:main-technical}, we have,
\begin{align*}
    \|\bA - \bQ\bQ^\top\bA\|^2 &\leq \paren*{1+2\e} \paren*{ \paren*{ 2\lambda_2 }^{\frac{1}{(2q+1)}}  + \varepsilon\lambda_1  } \\
    &\le (1+2\e) \paren*{(1+4\e)\sigma_{k+1}(\bA)^2 + 3\lambda_1\e } \\
    &\le (1+10\e)\sigma_{k+1}(\bA)^2 + 6\lambda_1\e.
\end{align*}
Replacing $\e$ by $\e/10$ recovers the bound. We can compute $\bA \bS$ in time $O(\nnz (\bA)\log(l)/\e)$, while $(\tilde\bA\tilde\bA^\top)^q\tilde\bA\bOmega$ takes $O(qmr_1r_2) = \tilde{O}(mlk/\e^3)$ time. Constructing $\mathbf{Q}$ takes $O(mk^2)$ time.
\end{proof}

Our psd low-rank factorization guarantee (Theorem \ref{thm:sketched_power_nystrom}) follows via a reduction from Theorem~\ref{thm:column_sketched_iteration}.
\begin{proof}[Proof of Theorem \ref{thm:sketched_power_nystrom}]
Let $\bQ$ denote the orthonormal basis returned by Algorithm \ref{alg:column_sketched_iteration} when applied to $\bA^{1/2}$ with sketches $\bS,\bOmega$. Then $\bQ$ is the orthonormalization of the columns of 
    \[
        \mathbf{K}
        :=
        \big(\bA^{1/2}\bS(\bA^{1/2}\bS)^\top\big)^{q}\bA^{1/2}\bS\bOmega = \bA^{1/2}\bS (\bS^\top \bA \bS)^q \bOmega = \bA^{1/2}\bS \bY.
    \]
    Thus $\bK^{\top} \bK = \bY^\top\tilde\bW\bY$  and therefore,
    \begin{align*}        
        \mathbf{C}\bW^\dagger \mathbf{C}^\top 
        &=
        \bA\bS\bY
        \big(\bY^\top\tilde\bW\bY\big)^\dagger
        \bY^\top\bS^\top\bA =
        \bA^{1/2}\mathbf{K}\paren*{\mathbf{K}^\top\mathbf{K}}^\dagger\mathbf{K}^\top\bA^{1/2}
        =
        \bA^{1/2}\bQ\bQ^\top\bA^{1/2}.
    \end{align*}
    Applying Theorem \ref{thm:column_sketched_iteration} with $\bA^{1/2}$ in place of $\bA$ gives
    \[
        \norm{\bA-\bC\bW^\dagger\bC^\top} = \norm{\paren*{\bI-\bQ\bQ^\top}\bA^{1/2}}^2
        \le
        \paren*{1+\e}\sigma_{k+1}^2(\bA^{1/2})
        +
        \frac{\e}{l}\sum_{i>l}\sigma_i^2(\bA^{1/2}).
    \]
    Since $\bA$ is psd, $\sigma_i^2(\bA^{1/2})=\sigma_i(\bA) = \lambda_i(\bA)$ for every $i$, which yields the bound. Since $\bS$ is sparse, the matrices $\tilde\bC=\bA\bS$ and $\tilde\bW=\bS^\top\tilde\bC$ can be computed in $\tilde{O}\paren*{\nnz(\bA)/\e}$ and $\tilde{O}\paren*{ln/\e^3}$ time, respectively. Computing $\bY=\tilde\bW^q\bOmega$ takes $\tilde{O}\paren*{kl^2/\e^5}$ time, and forming $\mathbf{C}=\tilde\bC\bY$ takes $O\paren*{kln/\e^2}$ time. Computing $\mathbf{W}=\bY^\top\tilde\bW\bY$ takes $O\paren*{kl^2/\e^4+k^2l/\e^2}$ time.
\end{proof}
Obtaining our general low-rank factorization guarantee (Theorem \ref{thm:generalised_nystrom}) requires additional effort, as it does not follow from Theorem \ref{thm:column_sketched_iteration}. Here, we rely on both the spectral and Frobenius norm bounds in Theorem \ref{t:main-technical}, as well as a sketching guarantee for generalized regression (see Appendix~\ref{subsec:ammproofs}).
\begin{lemma}[Generalized regression \cite{cohen_nelson_woodruff}] \label{lem:genreg}
Given $\bA\in\R^{n\times d},\bB\in\R^{n\times p}$, consider the generalized regression problem of finding $\bX$ to minimize $\norm{\bA\bX - \bB}$. Let $\bS\in\R^{n\times r}$ be one of the matrices from Lemma \ref{lem:specapprox} for some $k\ge d$. Then,  $\bXt :=(\bS^\top \bA)^\dagger \bS^\top\bB$ with probability $1-\delta$ satisfies, 
    \[
        \norm{\bA\bXt-\bB}^2
        \le (1+\e^2)\norm{\bP_{\bA}\bB-\bB}^2 + \frac{\e^2}{k}\norm{\bP_{\bA}\bB-\bB}_F^2.
    \]
\end{lemma}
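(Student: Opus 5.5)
The statement to prove is Lemma~\ref{lem:genreg}. I would reduce generalized regression to a pair of sketching properties of $\bS$ and use the standard decomposition of the residual. Write $\bB = \bP_{\bA}\bB + \bB^\perp$ with $\bB^\perp := (\bI-\bP_{\bA})\bB$, and let $\bU\in\R^{n\times d'}$, $d'\le d\le k$, be an orthonormal basis for the range of $\bA$.

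\textbf{Step 1: subspace embedding.} For each distribution in Lemma~\ref{lem:specapprox} with parameter $k\ge d$, the sketch $\bS^\top\bU$ is an $\e$-subspace embedding of $\mathrm{range}(\bU)$; in particular it has full column rank. This follows from Lemma~\ref{lem:specapprox} applied to $\bU^\top$: since $\bU^\top$ has at most $k$ nonzero singular values, the tail sum vanishes, so $\lambda=0$ and the regularized approximation becomes a plain subspace embedding. Full column rank makes $(\bS^\top\bA)^\dagger\bS^\top\bA$ act as the identity on the relevant coordinates. Consequently $\bA\bXt-\bB = \bA(\bS^\top\bA)^\dagger\bS^\top\bB^\perp - \bB^\perp$, because the $\bP_{\bA}\bB$ component is reproduced exactly. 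The two pieces are orthogonal (one lies in $\mathrm{range}(\bA)$, the other in its complement), so
\[
\norm{\bA\bXt-\bB}^2 \le \norm{\bB^\perp}^2 + \norm{\bU(\bS^\top\bU)^\dagger\bS^\top\bB^\perp}^2 ,
\]
where the inequality holds column by column for unit vectors.

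\textbf{Step 2: bound the second term.} The embedding gives $\sigma_{\min}(\bS^\top\bU)\ge\sqrt{1-\e}$, so it suffices to bound $\norm{\bU^\top\bS\bS^\top\bB^\perp}$, using $\bU^\top\bB^\perp=0$. This is an approximate matrix multiplication guarantee in spectral norm with a stable-rank-type error term:
\[
\norm{\bU^\top\bS\bS^\top\bB^\perp}^2\lesssim \e^2\Big(\norm{\bB^\perp}^2+\tfrac1k\norm{\bB^\perp}_F^2\Big).
\]
This is exactly the property that \cite[Sections 2.1.1-2.1.3]{cohen_nelson_woodruff} establish for these sketch families. Alternatively, it can be derived from the $\lambda$-regularized spectral approximation of the concatenated matrix $[\bU,\ \bB^\perp/\rho]^\top$ with $\rho^2=\norm{\bB^\perp}^2+\frac1k\norm{\bB^\perp}_F^2$. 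For that matrix the tail sum over $i>k$ is at most $\norm{\bB^\perp}_F^2/\rho^2$, so $\lambda\lesssim 1$. The off-diagonal block of the Loewner approximation then controls the cross term $\bU^\top\bS\bS^\top\bB^\perp/\rho$ by $O(\e)$.

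\textbf{Step 3: assemble and identify the obstacle.} Substituting Step 2 into Step 1 and rescaling $\e$ by a constant (absorbed into $r$ and $s$) yields the stated bound. I expect the main obstacle to be Step 2, that is, extracting the off-diagonal cross-term bound from a two-sided Loewner approximation. A block $2\times2$ argument works: if $\bM\preceq \bN\pm\e(\bN+\lambda\bI)$ with $\bN$ block-diagonal, then the off-diagonal block of $\bM$ is bounded by $\e$ times the geometric mean of the regularized diagonal blocks. This gives the $\e^2$ (rather than $\e$) factor in the final bound.
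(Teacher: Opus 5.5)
Your proof is correct. It rests on the same two ingredients as the paper: a subspace embedding for $\mathrm{range}(\bA)$, and a spectral-norm AMM bound on the cross term $\bU^\top\bS\bS^\top\bB^\perp$. The difference is that the paper only cites the step that combines them. It gets the $(k,\e/2\sqrt2)$-AMM property from OSE moments (Theorem~\ref{thm:amm}), obtains the $1/2$-subspace embedding by applying AMM to $\bU_{\bA}$ with itself, and then invokes Theorem~\ref{thm:genregcnw} as a black box with $\nu=\e^2$. You instead reprove that theorem: exact reproduction of $\bP_{\bA}\bB$, the identity $\bA(\bS^\top\bA)^\dagger\bS^\top=\bU(\bS^\top\bU)^\dagger\bS^\top$, the orthogonal split, and the $(1-\e)^{-1}$ pseudoinverse bound. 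This makes the origin of the $\e^2$ explicit. One wording fix: the orthogonal split holds because the residual pieces satisfy $\bM_1^\top\bM_2=0$, so $\norm{(\bM_1+\bM_2)\bx}^2=\norm{\bM_1\bx}^2+\norm{\bM_2\bx}^2$ for every unit $\bx$; it is not a ``column by column'' statement. A minor citation point: Sections 2.1.1--2.1.3 of \cite{cohen_nelson_woodruff} give OSE moments, and AMM then follows via Theorem~\ref{thm:amm}.

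Your second route to the cross-term bound is the genuinely new piece, and it is sound. Let $\bC:=[\bU,\ \bB^\perp/\rho]^\top$. Since $\bU^\top\bB^\perp=0$, $\bC\bC^\top$ is block diagonal with eigenvalues at most $1$, and its tail is at most $\norm{\bB^\perp}_F^2/\rho^2\le k$. So by monotonicity in the regularizer you may take $\lambda=1$. Conjugating the two-sided Loewner bound by the block-diagonal $\bigl(\e(\bC\bC^\top+\lambda\bI)\bigr)^{-1/2}$ then bounds the off-diagonal block by $\e(1+\lambda)\le 2\e$.

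This derives Lemma~\ref{lem:genreg} from Lemma~\ref{lem:specapprox} alone, applied to a matrix that depends on $\bA,\bB$ but not on $\bS$. It bypasses OSE moments and both cited theorems, at the cost of worse constants that are absorbed by rescaling $\e$. It also gives Step~1 for free: the top-left block of the same Loewner bound yields $\norm{\bU^\top\bS\bS^\top\bU-\bI}\le 2\e$, so both steps hold on one event and no union bound is needed.

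The only formality to add is that Lemma~\ref{lem:specapprox} assumes $k\le\rank$. When the matrix you apply it to ($\bU^\top$, or a low-rank $\bC$) has smaller rank, use that rank in place of $k$ (the sketch-size requirements are monotone in $k$) and then enlarge the regularizer.
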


\begin{proof}[Proof of Theorem \ref{thm:generalised_nystrom}]
    Let $\bB := (\tilde\bA \tilde\bA^\top)^{q}\tilde\bA$, where $\tilde\bA=\bA\bS_1$, and recall that $\bY = \bB\bOmega$. Let $\bS_1$ have $s = O(\log(l)/\e)$ non-zero entries per row with $r_1 =  O(l\log(l)/\e^2)$, and let $\bOmega$ be Gaussian with $r_2 = O(k)$. Then, by Theorem \ref{t:main-technical} (via Lemma \ref{lem:specapprox}), we get,
    \begin{align}
        \norm{\bA-\bP_{\bY}\bA}^2 &\le \paren*{1+2\e} \paren*{ \paren*{ 2\lambda_2 }^{\frac{1}{2q+1}}  + \varepsilon\lambda_1  },  \label{eq:gennys_spec}\\
        \norm{\bA-\bP_{\bY}\bA}^2_F &\le \min_r \Big\{ 8r(\lambda_2^{\frac1{2q+1}} + \lambda_1)  + \sum_{i>r} \sigma^2_{i}(\bA) \Big\}. \label{eq:gennys_frob}
    \end{align}
    where $\lambda_1 = \frac1l\sum_{i>l}\sigma_i^2(\bA)$ and $\lambda_2 = \frac1k\sum_{i>k}\sigma_i^2(\bB)$ as in the proof of Theorem \ref{thm:column_sketched_iteration}. 
    Let $\bS_2\in\R^{m\times r_1}$ have the same sparsity and sketching dimension as $\bS_1$.
    Then, by Lemma \ref{lem:genreg}, applied to the generalized regression problem $\min_{\bX} \norm{\bY \bX - \bA}$ with $\bX:=\paren*{\bS_2^\top\bY}^\dagger \bS_2^\top\bA$,
    \[
        \norm{\bY\bX-\bA}^2
        \le (1+\e^2)\norm{\bP_{\bY}\bA-\bA}^2 + \frac{\e^2}{l}\norm{\bP_{\bY}\bA-\bA}_F^2.
    \]
    Using \eqref{eq:gennys_spec}, \eqref{eq:gennys_frob}, and the bound $\paren*{ 2\lambda_2 }^{\frac{1}{2q+1}} \le (1+4\e)\sigma_{k+1}(\bA)^2 + 2\lambda_1\e$ from Lemma~\ref{lem:lambda2bound},
    \begin{align*}
        \norm{\bY\bX-\bA}^2
        &\le (1+\e^2)\paren*{ (1+10\e)\sigma_{k+1}(\bA)^2 + 6\lambda_1\e }  + \frac{\e^2}{l}\paren*{ 8l(\lambda_2^{1/2q+1} + \lambda_1)  + \sum_{i>l} \sigma^2_{i}(\bA)  } \\
        &\le \paren*{1 + 40\e}\sigma_{k+1}(\bA)^{2} + \frac{29\e}{l} \sum_{i>l} \sigma^2_{i}(\bA).
    \end{align*}
    Replacing $\e$ by $\e/40$, we are done. The matrix $\bY$ can be computed in time $\tilde{O}\paren*{\nnz(\bA) + mlk/\e^3}$, whereas forming $\bX=\paren*{\bS_2^\top\bY}^\dagger \bS_2^\top\bA$ takes $\tilde{O}\paren*{\nnz(\bA)+mk + lk^2/\e^2 + nlk/\e^2}$ time.    
\end{proof}

\subsection{Proof Sketch of Theorem \ref{t:main-technical}: Spectral Norm Bound} \label{subsec:specnormbd}

We now return to the proof of our main technical result. We start with the spectral norm $\|\bA-\bQ\bQ^\top\bA\|$ where $\bQ = \mathrm{orth}(\bB\bOmega)$ and $\bB=(\bA\bS(\bA\bS)^\top)^q\bA\bS$. Intuitively, the key difference between this and the classical power iteration is that we get a subspace which aligns more closely with the subspace spanned by the top $k$ singular vectors of $\tilde\bA :=\bA\bS$ instead of $\bA$. This means that if we project $\tilde\bA$ onto the column space of $\bQ$, we can get a good low-rank approximation of $\tilde\bA$. 

However, we need a low-rank approximation of $\bA$, and not $\tilde\bA$. So we use the regularized spectral approximation guarantee to show that the approximation error of the projection $\bP_{\bB\bOmega} := \bQ\bQ^\top$ on $\bA$ is bounded by the approximation error of the projection $\bP_{\bB\bOmega}$ on $\tilde\bA$. Formally, since $\tilde\bA$ is a $(\e, \lambda_1)$-regularized spectral approximation of $\bA$, we have,
     $\bA\bA^\top \preceq \frac{1}{1-\varepsilon}
(\tilde\bA \tilde\bA^\top +\varepsilon\lambda_1 \bI )$, which gives
\begin{align*}
    \paren*{\bI - \bP_{\bB \bOmega}}\bA\bA^\top\paren*{\bI - \bP_{\bB \bOmega}} &\preceq (1+2\e)
\paren*{\bI - \bP_{\bB \bOmega}}\big( \tilde\bA \tilde\bA ^\top +\varepsilon\lambda_1 \bI \big)\paren*{\bI - \bP_{\bB \bOmega}}
\end{align*}
where we use the fact that when $\e \le 1/2$, $\frac{1}{1-\varepsilon} \le 1+2\e$. With a careful application of Jensen's inequality, we use the above to show that,
\begin{equation*}
     \norm*{\paren*{\bI - \bP_{\bB \bOmega}}\bA}^2 \le \paren*{1+2\e} \paren*{ \norm*{\paren*{\bI - \bP_{\bB \bOmega}} \big(\tilde\bA \tilde\bA^\top\big)^{2q+1} \paren*{\bI - \bP_{\bB \bOmega}} }^\frac{1}{2q+1} + \varepsilon\lambda_1  }. \\
\end{equation*}
The norm under the $\frac1{2q+1}$ exponent simplifies to $\norm{(\bI - \bP_{\bB\bOmega})\bB}^2$, which can be bounded using Lemma~\ref{lem:projspecapprox}, obtaining $\norm*{(\bI - \bP_{\bB\bOmega})\bB}^2 \le 2\lambda_2$. See the complete proof in Appendix \ref{sec:spec_norm_proof}.

\subsection{Proof Sketch of Theorem \ref{t:main-technical}: Frobenius Norm Bound}
\label{subsec:forbnorm}

We now turn to bounding the Frobenius norm $\|\bA-\bQ\bQ^\top\bA\|_F^2$, which is essential for our low-rank factorization guarantee (Theorem \ref{thm:generalised_nystrom}). Curiously, while bounding the Frobenius norm error in low-rank approximation algorithms is typically more straightforward than the spectral norm, the presence of the power method makes the Frobenius norm argument more delicate in this case. 

Let $\bP_{\perp} := \bI - \bP_{\bB\bOmega} = \bI - \bQ\bQ^\top$. We bound the effect of $\bP_{\perp}$ on $\bA$ in two ways. By the Pythagorean theorem, we have $\norm{\bP_{\perp}\bA}^2_F = \norm{\bP_{\perp}\bA_r}^2_F + \norm{\bP_{\perp}\bA_{>r}}^2_F$, where $\bA_r$ represents the matrix formed by the top $r$ principal vectors of $\bA$ and $\bA_{>r} := \bA - \bA_r$. The second term, $\norm{\bP_{\perp}\bA_{>r}}^2_F$ can be at most $\norm{\bA_{>r}}^2_F = \norm{\bA - \bA_r}^2_F = \sum_{i>r} \sigma^2_{i}(\bA)$, so the main challenge is bounding the first term. 

Using the SVD of $\bA_r$, we get, $\norm{(\bI - \bP_{\bB \bOmega})\bA_r}^2_F = \sum_{i=1}^r \sigma_i^2(\bA)\, \bu_i^\top(\bI - \bP_{\bB \bOmega})\bu_i$, where $\bu_i$ are the left singular vectors of $\bA$. Following the proof of Lemma \ref{lem:projspecapprox}, since $\bB\bOmega$ is a $\lambda_2$-regularized $1/2$-spectral approximation of $\bB$, we have, $\bI - \bP_{\bB\bOmega} \preceq 2\lambda_2 (\bB\bB^\top + \lambda_2\bI)^{-1}$.
It follows that
\begin{align}
     \norm{(\bI - \bP_{\bB \bOmega})\bA_r}^2_F = \sum_{i=1}^r \sigma_i^2(\bA)\, \bu_i^\top(\bI - \bP_{\bB \bOmega})\bu_i \le \sum_{i=1}^r \sigma_i^2(\bA)\, 2 \lambda_2 \bu_i^\top \paren*{\bB \bB^\top + \lambda_2 \bI}^{-1} \bu_i. \label{eq:bspecapprox}   
\end{align}

However, we are faced with the mismatch that $\bu_i$ are the left singular vectors of $\bA$, but we have them acting on $\bB$, a power of $\tilde\bA=\bA \bS$, instead. Ideally we want, $\bu_i^\top \paren*{\bB \bB^\top + \lambda_2 \bI}^{-1} \bu_i \propto 1/\sigma_i^2(\bA)$ to kill the contribution of the large singular values, but a direct expansion of $\bu_i^\top \paren*{\bB \bB^\top + \lambda_2 \bI}^{-1} \bu_i$ gives us many $\sigma_j^2(\bB) = \sigma_j\big((\tilde\bA\tilde\bA^\top)^{2q+1}\big)$ terms in the denominator.

To overcome this, we work with $\bBt := (\tilde\bA \tilde\bA^\top)^q \tilde\bA /{\theta^{2q+1}} $, a scaled version of $\bB$.  This does not change the algorithm since $\bP_{\bB\bOmega} = \bP_{\bBt\bOmega}$. Then, for the indices with $\sigma_j^2(\tilde\bA) \ge \theta^2$ we have,
\begin{align*}
     \sigma_j^2(\bBt) = \frac{\sigma_j\big((\tilde\bA\tilde\bA^\top)^{(2q+1)}\big)}{\theta^{2(2q+1)}} \ge \paren*{\frac{\sigma_j^2(\tilde\bA)}{\theta^{2}}}^{2q+1} \ge \frac{\sigma_j^2(\tilde\bA)}{\theta^2}.
\end{align*}
Thus, when working with $\bBt$ instead of $\bB$, we are able to replace the $\sigma_j^2(\bBt)$ terms that arise in \eqref{eq:bspecapprox} with $\sigma_j^2(\tilde\bA)$ when $\sigma_j^2(\tilde\bA)$ is large. This allows us to use the regularized spectral approximation guarantee for $\tilde \bA$ with  $\lambda_1' = \lambda_1 + \lambda_2^{1/2q+1}$ (where $\lambda_1'$ is chosen to simplify the resulting calculations) obtaining, 
\[ \sigma_i^2(\bA)\, \bu_i^\top(\bI - \bP_{\bBt \bOmega})\bu_i \le \sigma_i^2(\bA) \cdot \frac{C \lambda_1'}{\sigma_i^2(\bA) + \lambda_1'} = O(\lambda_1'),  \]
since a given $\lambda_1$-regularized $\e$-spectral approximation $\tilde\bA$ is also a $\lambda_1'$-regularized $\e$-spectral approximation for $\lambda_1' \ge \lambda_1$. Further details can be found in the full proof in Appendix \ref{sec:frob_norm_proof}.

\section{Experiments}
\label{s:experiments}

\begin{figure}[t]
    \centering
    \begin{minipage}{0.32\linewidth}
        \centering
        \includegraphics[width=\linewidth]{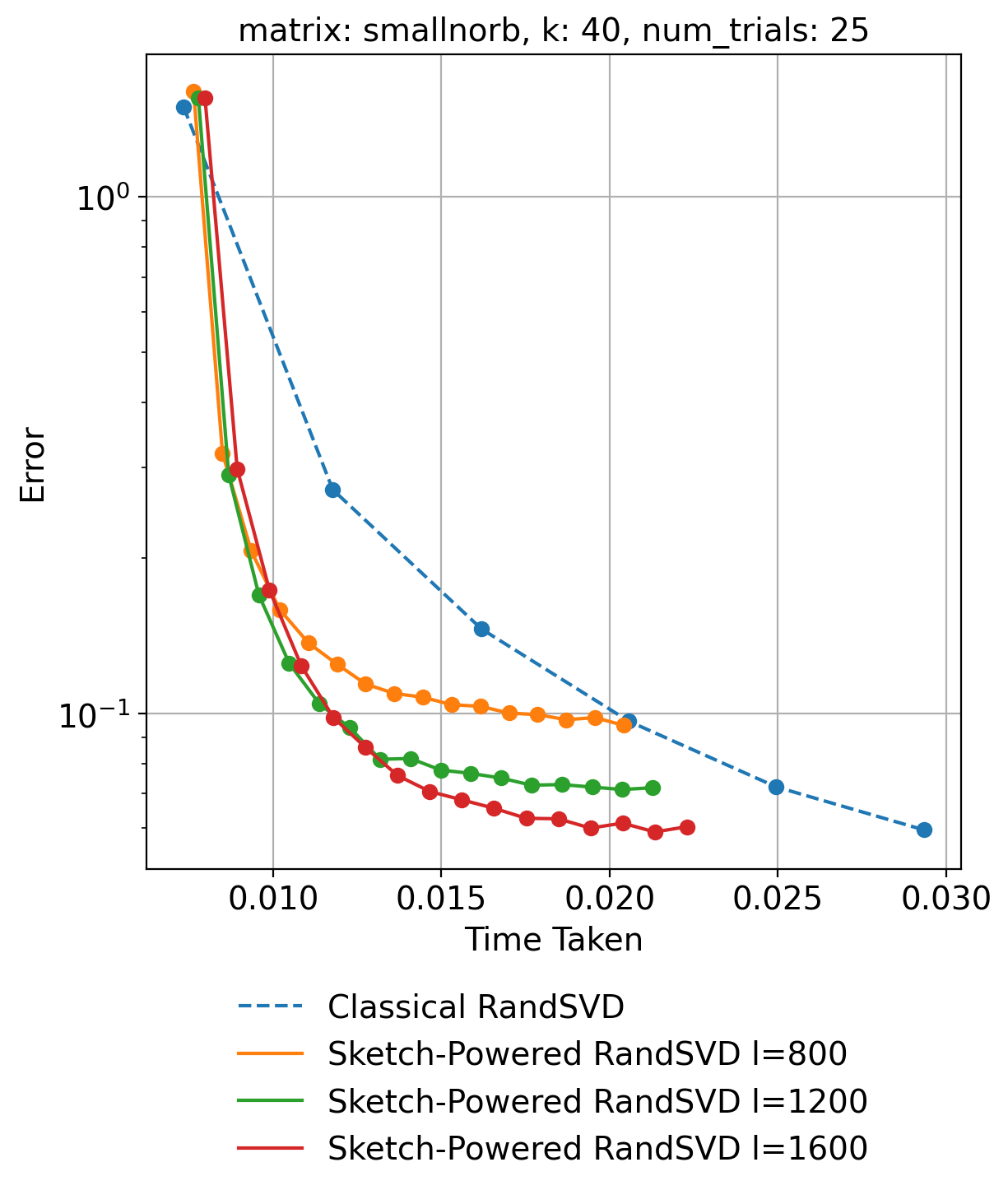}
    \end{minipage}\hfill
        \begin{minipage}{0.32\linewidth}
        \centering
        \includegraphics[width=\linewidth]{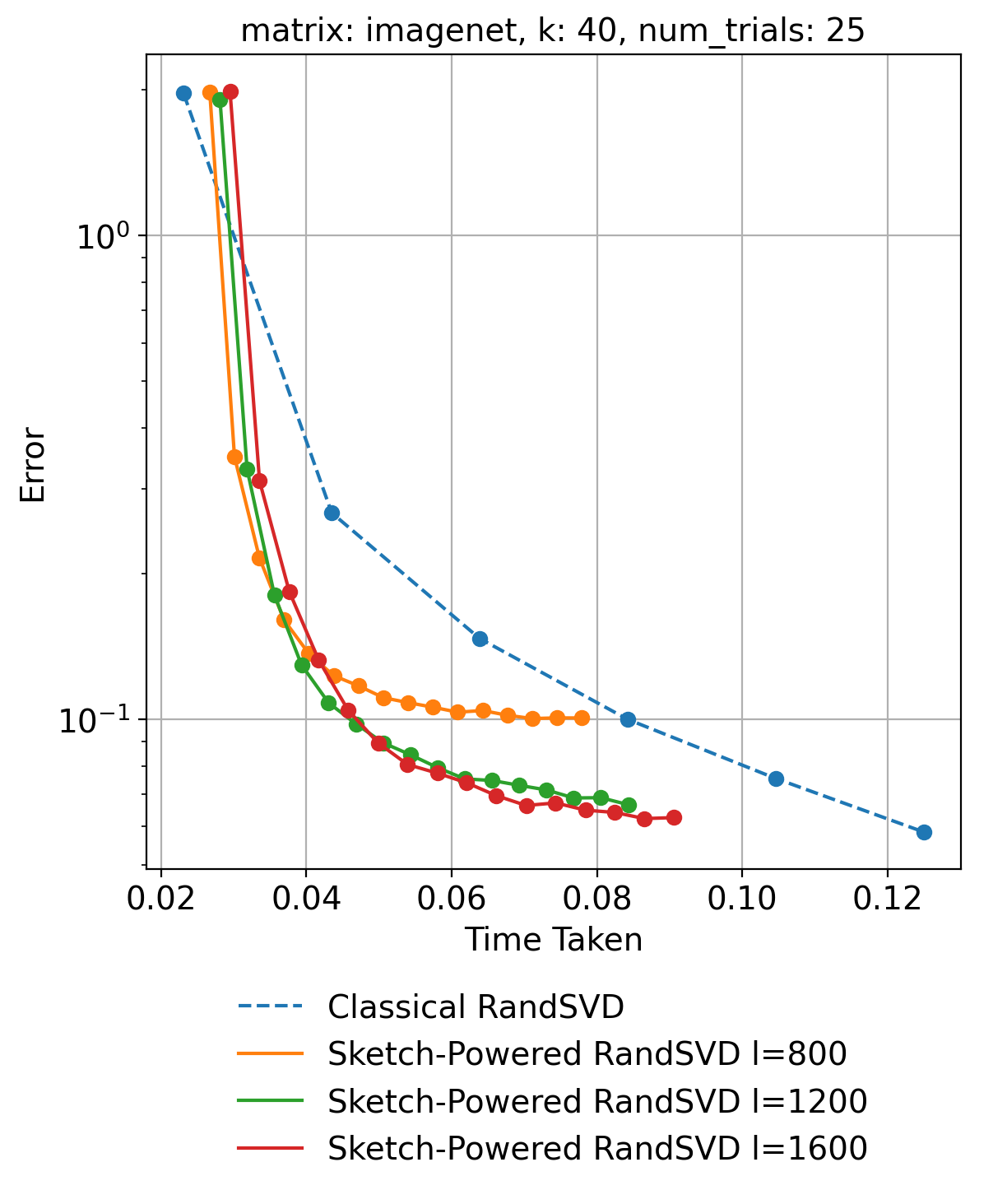}
    \end{minipage}\hfill
    \begin{minipage}{0.32\linewidth}
        \centering
        \includegraphics[width=\linewidth]{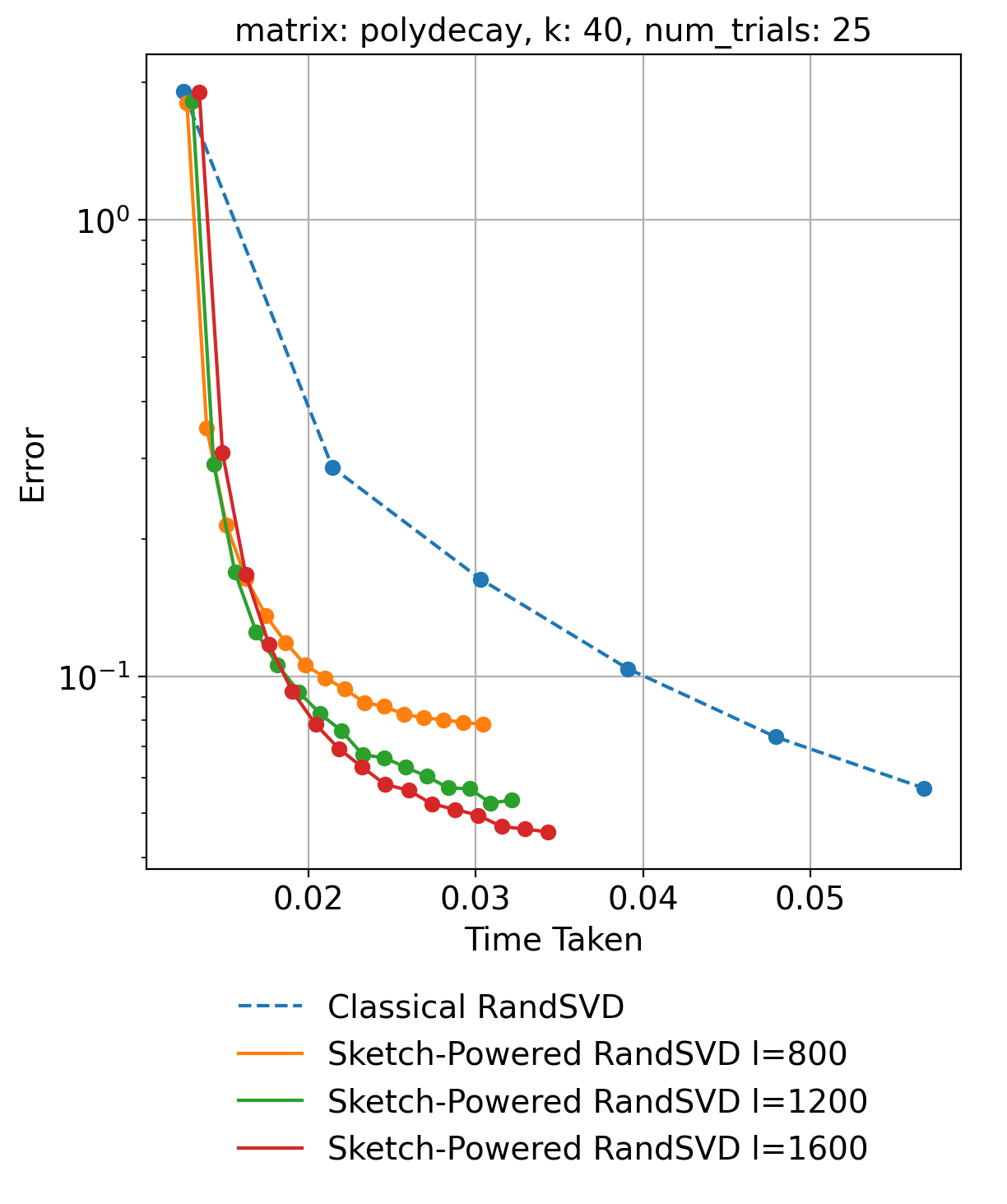}
    \end{minipage}
    \caption{Approximation error vs time for Classical Randomized SVD (RandSVD) and Sketch-Powered RandSVD for (from left to right) \texttt{smallnorb}, \texttt{ImageNet}, and \texttt{polydecay} matrices.}
    \label{fig:experiments}
\end{figure}

To validate our theory, we compare the numerical performance of sketched power method to its classical counterpart on three large data matrices. In this section, we evaluate a sketch-powered version of Randomized SVD, as it is arguably the most common application of the power~method \cite{halko2011finding} (see Appendix \ref{app:experiments} for more experiments). Concretely, we use Algorithm \ref{alg:column_sketched_iteration} to construct the range finder $\bQ$, and then use that to compute an approximate SVD, $\bA\approx \bU\bSigma\bV^\top\!=\bQ\bQ^\top\bA$, as described in Section \ref{s:main}. Focusing on the regime where $\e = \Theta(1)$, for simplicity we let $r_1 = l$ and $r_2 = k$.

The matrices we consider are: (1) \texttt{smallnorb} \cite{smallnorb} (size $24300 \times 9216$), (2) \texttt{ImageNet} \cite{imagenet} (size $20800 \times 50176$ matrix containing $50176$ red channel pixel values of $20800$  images resized to pixel size $224 \times 224$) and (3) \texttt{polydecay}, a synthetic matrix  of size $30000 \times 15000$ generated by multiplying on both sides a diagonal matrix with polynomially decaying values $30000/i$ by random orthogonal matrices  of the appropriate dimension. We let the sketching matrix $\bS\in\R^{n\times l}$ in Algorithm \ref{alg:column_sketched_iteration} be the CountSketch matrix \cite{countsketch,clarkson2017low} with one non-zero per row, and we use a Gaussian matrix $\bOmega\in\R^{l\times k}$ for the power method. The implementation was done using an A100 GPU in Google Colab and our code is available on the repository - \url{https://github.com/shabarishch/SketchedPowerMethod}.

The results of the comparison for target rank $k=40$ are in Figure \ref{fig:experiments}. For each iteration $q=0,1,...$ we plot the average relative error $\norm{\bA - \bU\bSigma\bV^\top}/\sigma_{k+1}(\bA)- 1$ for that iterate averaged over 25 trials versus the average time taken for that iterate. We vary the sketch size among $l\in\{800, 1200, 1600\}$.

Since the sketched power iterations are faster, we plot errors for 15 iterations against only 5 iterations for the classical power method. While the approximation error of the Sketch-Powered RandSVD does eventually start to plateau, the method reaches a moderately small relative error (say, $0.1$) much sooner than the Classical RandSVD does. Such moderately small error is sufficient for many machine learning applications \cite{gittens2013revisiting}, and the faster iteration time of the sketched power method more than makes up for the added overhead cost of forming the sketch $\bA\bS$. As expected, we observe that there is some trade-off between the iteration speed and error plateau when varying the sketch size $l$.

In Appendix \ref{app:experiments}, we also provide a numerical evaluation of the performance of Sketch-Powered Low-Rank Factorization (Algorithm \ref{alg:generalised_nystrom}). Here, we observe similar computational improvements from using sketched power method as in the case of RandSVD. 

\vspace{-0.1cm}
\section{Conclusions}
\vspace{-0.1cm}
We develop an algorithmic and theoretical framework for accelerating the power method using fast sketching. We use this framework to design and analyze efficient algorithms for singular value decomposition, low-rank factorization, and Nystr\"om approximation. We demonstrate the effectiveness of this approach on several benchmark data matrices.

\subsection*{Acknowledgments}
Partially supported by DMS 2054408, CCF 2338655, and a Google ML and Systems Junior Faculty Award. This work was done in part while MD was visiting the Simons Institute for the Theory of Computing. SC acknowledges support from the Allen Shields Memorial Fellowship and the Rackham One-Term Dissertation Fellowship at the University of Michigan. The authors thank Anil Damle, Raphael Meyer, Cameron Musco, Christopher Musco, and Mark Rudelson for helpful conversations.

\printbibliography


\appendix

\section{Related Work}
\label{a:related-work}

Early work in numerical linear algebra focused on using the power method for estimating the largest singular vector of a matrix \cite{kuczynski1992estimating}. The modern analysis for using the power method to construct the range finder and Randomized SVD was developed by \cite{rokhlin2010randomized}, and then extended and popularized by \cite{halko2011finding}, among others \cite{gu2015subspace,tropp2023randomized}. In this context, the power method is also often referred to as subspace iteration or power iteration. When seeking high-accuracy low-rank approximations, i.e., with error $(1+\e)\sigma_{k+1}(\bA)$ for $\e\ll 1$, the power method can be accelerated using Krylov subspaces \cite{musco2015randomized,bakshi2022low,chen2026does,meyer2024unreasonable}, which requires fewer iterations but has a higher per-iteration cost. In this work, we focus on computationally constrained environments where even the per-iteration cost of the classical power method is a bottleneck, and so we are primarily interested in the $\e=\Omega(1)$ regime.

Several prior works have considered approximate versions of the power method. In particular, \cite{hardt2014noisy} study a noisy version of the power method, where each application of $\bA$ is accompanied by some additive noise, a model that arises in applications to matrix completion \cite{jain2013low}, streaming algorithms \cite{mitliagkas2013memory}, and differential privacy \cite{hardt2013beyond}. While \cite{hardt2014noisy} introduce constraints on the additive noise that scale with the spectral gap of $\bA$, in our work, we are dealing with a very specific type of noise that comes from sketching, and are able to avoid dependence on any spectral gaps.

In a nearly concurrent work, \cite{chang2026improving} consider a sketched variant of the power method which is very similar to \eqref{eq:sketched-power}. However, even though they recommend the use of fast sketching matrices in practice, their theoretical analysis crucially requires the sketching matrices to be Gaussian, thereby limiting any computational benefits of the approach other than in single-pass streaming models. Our theoretical framework can be viewed as avoiding this key limitation through the use of the regularized spectral approximation argument, thus addressing a natural question arising from \cite{chang2026improving}.

Fast sketching has been extensively used more broadly in the area of Randomized Numerical Linear Algebra (RandNLA, \cite{woodruff2014sketching,drineas2016randnla,martinsson2020randomized,murray2023randomized,derezinski2024recent}), as well as specifically in the context of low-rank approximation, e.g., \cite{sarlos2006improved,drineas2006fast,clarkson2017low,halko2011finding,nakatsukasa2020fast}. Particularly relevant to our analysis is the line of works that has relied on the regularized spectral approximation property of sketching. This notion was first used for analyzing sub-sampling based methods in the context of Nystr\"om approximation \cite{alaoui2015fast,musco2017recursive,rudi2018fast,garg2025faster} and psd low-rank approximation \cite{musco2017sublinear}. In contrast, for sparse and structured sketching matrices such as those considered in this work, a more common approach has been to rely on the oblivious subspace embedding (OSE) property, e.g., \cite{sarlos2006improved,woodruff2014sketching,cohen_nelson_woodruff}. In fact, one can relatively straightforwardly pass from a slightly more general \emph{OSE moments} property \cite{cohen_nelson_woodruff} to regularized spectral approximation, e.g., as described by \cite{derezinskisidford}.
\section{Proof of Theorem \ref{t:main-technical}: Spectral Norm Bound }
\label{sec:spec_norm_proof}

Since $\bA \bS$ is an $\lambda_1$-regularized $\e$-spectral approximation of $\bA$, we have,
\begin{equation}\label{eq:column_stage1approx}
    (1-\varepsilon)\paren*{ \bA\bA^\top + \lambda_1\bI } \preceq \bA \bS (\bA \bS)^\top + \lambda_1\bI \preceq (1+\varepsilon)\paren*{ \bA\bA^\top + \lambda_1\bI }
\end{equation}
which gives,
\begin{equation*}
    \bA\bA^\top \preceq \frac{1}{1-\varepsilon}
\paren*{ \bA \bS (\bA \bS)^\top +\varepsilon\lambda_1 \bI }
\end{equation*}

Let $\bP$ be any orthogonal projection. Then, 
\begin{equation*}
    \bP\bA\bA^\top\bP \preceq \frac{1}{1-\varepsilon}
\bP\paren*{ \bA \bS (\bA \bS)^\top +\varepsilon\lambda_1 \bI }\bP
\end{equation*}
Let $\bx$ be a unit vector at which $\bP\bA\bA^\top\bP$ attains its norm. Then,
\begin{equation*}
     \norm*{\bP\bA}^2 \le \frac{1}{1-\varepsilon}
\bx^\top \bP\paren*{ \bA \bS (\bA \bS)^\top +\varepsilon\lambda_1 \bI }\bP\bx
\end{equation*}

Let $\bA \bS (\bA \bS)^\top +\varepsilon\lambda_1 \bI = \bU \paren*{\bD + \varepsilon\lambda_1\bI} \bU^\top $ be the eigendecomposition. Then,
\begin{align*}
    \bx^\top \bP\paren*{ \bA \bS (\bA \bS)^\top +\varepsilon\lambda_1 \bI }\bP\bx &= \bx^\top \bP\bU \paren*{ \bD +\varepsilon\lambda_1 \bI }\bU^\top\bP\bx \\
    &= \sum_{i=1}^m \paren*{\sigma_i(\bA \bS)^2 + \varepsilon\lambda_1} (\bU^\top\bP\bx)_i^2
\end{align*}
and,
\begin{align*}
     \norm*{\bP\bA}^{2(2q+1)} &\le \frac{1}{\paren{1-\varepsilon}^{2q+1}}
\paren*{ \sum_{i=1}^m \paren*{\sigma_i(\bA \bS)^2 + \varepsilon\lambda_1} (\bU^\top\bP\bx)_i^2 }^{2q+1} \\
&= \paren*{\frac{\norm{\bU^\top \bP \bx}_2^2}{1-\varepsilon}}^{2q+1}
\paren*{ \sum_{i=1}^m \paren*{\sigma_i(\bA \bS)^2 + \varepsilon\lambda_1} \frac{(\bU^\top\bP\bx)_i^2}{\norm{\bU^\top \bP \bx}_2^2} }^{2q+1} \\
&\le \paren*{\frac{\norm{\bU^\top \bP \bx}_2^2}{1-\varepsilon}}^{2q+1}
\paren*{ \sum_{i=1}^m \paren*{\sigma_i(\bA \bS)^2 + \varepsilon\lambda_1}^{2q+1} \frac{(\bU^\top\bP\bx)_i^2}{\norm{\bU^\top \bP \bx}_2^2} } \quad \text{ (Jensen)}\\
&\le \paren*{\frac{1}{1-\varepsilon}}^{2q+1} \norm{\bU^\top \bP \bx}_2^{2q}
\paren*{ \sum_{i=1}^m \paren*{\sigma_i(\bA \bS)^2 + \varepsilon\lambda_1}^{2q+1} (\bU^\top\bP\bx)_i^2  } \\
\implies \norm*{\bP\bA}^{2} &\le \paren*{\frac{1}{1-\varepsilon}} 
\paren*{ \sum_{i=1}^m \paren*{\sigma_i(\bA \bS)^2 + \varepsilon\lambda_1}^{2q+1} (\bU^\top\bP\bx)_i^2  }^\frac{1}{2q+1}\\
&\le \paren*{\frac{1}{1-\varepsilon}} \paren*{\paren*{ \sum_{i=1}^m \sigma_i(\bA \bS)^{2(2q+1)}(\bU^\top\bP\bx)_i^2  }^\frac{1}{2q+1} + \paren*{ \sum_{i=1}^m (\varepsilon\lambda_1)^{2q+1}(\bU^\top\bP\bx)_i^2  }^\frac{1}{2q+1}} \\
&\le \paren*{\frac{1}{1-\varepsilon}} \paren*{ \norm*{\bP \paren*{\bA \bS (\bA \bS)^\top}^{2q+1} \bP }^\frac{1}{2q+1} + \varepsilon\lambda_1  } \\
\end{align*}

where the second inequality after taking the $(2q+1)\textsuperscript{th}$ root follows from Minkowski's inequality (triangle inequality for $L_q$ norms). 

Now, $\norm*{\bP \paren*{\bA \bS (\bA \bS)^\top}^{2q+1} \bP } = \norm*{ \bP \paren*{  \bA \bS (\bA \bS)^\top}^q  \bA \bS }^2$. Thus,
\begin{align*}
    \norm*{\bP\bA}^2 \le \paren*{\frac{1}{1-\varepsilon}} \paren*{ \norm*{ \bP \paren*{  \bA \bS (\bA \bS)^\top}^q  \bA \bS }^{\frac{2}{(2q+1)}} + \varepsilon\lambda_1  }
\end{align*}

 Let $\bB = \paren*{\bA \bS (\bA \bS)^\top}^q \bA \bS$. Consider the secondary sketch $\bB \bOmega$ and let $\bP = \bI - \bP_{\bB\bOmega}$, where $\bP_{\bB\bOmega}$ is the orthogonal projection onto the column span of $\bB\bOmega$. Then,

 \begin{align*}
    \norm*{(\bI - \bP_{\bB\bOmega})\bA}^2 \le \paren*{\frac{1}{1-\varepsilon}} \paren*{ \norm*{ (\bI - \bP_{\bB\bOmega})\bB }^{\frac{2}{(2q+1)}}  + \varepsilon\lambda_1  }
\end{align*}

If $\bB \bOmega$ is a $\lambda_2$-regularized $1/2$-spectral approximation of $\bB$, by Lemma \ref{lem:projspecapprox}, $\norm*{(\bI - \bP_{\bB\bOmega})\bB}^2 \le 2\lambda_2$. So we get, when $\e \le 1/2$,
 \begin{align*}
    \norm*{(\bI - \bP_{\bB\bOmega})\bA}^2 \le \paren*{1+2\e} \paren*{ \paren*{ 2\lambda_2 }^{\frac{1}{(2q+1)}}  + \varepsilon\lambda_1  }
\end{align*}

\section{Proof of Theorem \ref{t:main-technical}: Frobenius Norm Bound }
\label{sec:frob_norm_proof}

    Let $\bP = \bQ\bQ^\top$. We want to estimate $\norm{(\bI-\bP)\bA}^2_F$. Let $\bA = \bA_r + \bA_{>r}$ be the split given by the SVD of $\bA$, where $\bA_r = \sum_{i=1}^{r} \sigma_i(\bA) \bu_i \bv_i^\top$ retains the top $r$ principal directions and $\bA_{>r} = \sum_{i=r+1}^{p} \sigma_i(\bA) \bu_i \bv_i^\top$ contains the remaining directions where $p$ is the rank of $\bA$. Since $\{\bv_i\}_{i=1}^r$ and $\{\bv_i\}_{i=r+1}^p$ are orthogonal sets of right singular vectors, the cross terms vanish and we have
    \begin{align*}
        \norm{(\bI - \bP)\bA}^2_F
        &= \norm{(\bI - \bP)(\bA_r + \bA_{>r})}^2_F \\
        &= \norm{(\bI-\bP)\bA_r}^2_F + 2\ip*{(\bI-\bP)\bA_r, (\bI-\bP)\bA_{>r}}_F + \norm{(\bI-\bP)\bA_{>r}}^2_F \\
        &= \norm{(\bI-\bP)\bA_r}^2_F + \norm{(\bI-\bP)\bA_{>r}}^2_F
    \end{align*}
    where the last equality uses $\ip*{(\bI-\bP)\bA_r, (\bI-\bP)\bA_{>r}}_F = \tr\paren*{(\bI-\bP)\bA_{>r}\bA_r^\top(\bI-\bP)} = 0$, since $\bA_{>r}\bA_r^\top = \sum_{i=1}^r \sum_{j=r+1}^p \sigma_i(\bA) \sigma_j(\bA) \bu_j \bv_j^\top \bv_i \bu_i^\top = 0$ by orthogonality of right singular vectors.

    For the first term, we expand using $\bA_r = \sum_{i=1}^r \sigma_i(\bA) \bu_i \bv_i^\top$:
    \begin{align*}
        \norm{(\bI-\bP)\bA_r}^2_F
        &= \tr\paren*{(\bI-\bP)\bA_r \bA_r^\top(\bI-\bP)} \\
        &= \tr\paren*{(\bI-\bP) \sum_{i=1}^r\sum_{j=1}^r \sigma_i(\bA)\sigma_j(\bA) \bu_i \bv_i^\top \bv_j \bu_j^\top (\bI-\bP)} \\
        &= \sum_{i=1}^r \sigma_i(\bA)^2\, \tr\paren*{(\bI-\bP)\bu_i\bu_i^\top(\bI-\bP)} \\
        &= \sum_{i=1}^r \sigma_i(\bA)^2\, \bu_i^\top(\bI-\bP)^2\bu_i = \sum_{i=1}^r \sigma_i(\bA)^2\, \bu_i^\top(\bI-\bP)\bu_i
    \end{align*}
    where the third equality uses orthonormality of the right singular vectors $\bv_i^\top \bv_j = \delta_{ij}$, and the last uses $(\bI-\bP)^2 = \bI-\bP$.

    For the second term, since $\bI - \bP$ is an orthogonal projection we have $(\bI-\bP)^2 = \bI - \bP \preceq \bI$, so
    \begin{align*}
        \norm{(\bI-\bP)\bA_{>r}}^2_F
        &= \tr\paren*{(\bI-\bP)\bA_{>r}\bA_{>r}^\top(\bI-\bP)} \\
        &= \tr\paren*{\bA_{>r}^\top(\bI-\bP)^2\bA_{>r}} \\
        &\le \tr\paren*{\bA_{>r}^\top\bA_{>r}} = \norm{\bA_{>r}}^2_F = \sum_{i=r+1}^{p} \sigma_i(\bA)^2.
    \end{align*}
Since we are given that, 
$\bA \bS$ is a $\lambda_1$-regularized $\e$-spectral approximation of $\bA$, we have,
\begin{equation*}
    (1-\varepsilon)\paren*{ \bA\bA^\top + \lambda_1\bI } \preceq \bA \bS (\bA \bS)^\top + \lambda_1\bI \preceq (1+\varepsilon)\paren*{ \bA\bA^\top + \lambda_1\bI }
\end{equation*}
However, observe that the above approximation still holds if $\lambda_1$ is replaced by $\lambda_1' \ge \lambda_1$. So letting $\lambda_1' = \lambda_1 + \lambda_2^{1/2q+1}$, we may assume
\begin{equation} \label{eq:lambda1primeapprox}
    (1-\varepsilon)\paren*{ \bA\bA^\top + \lambda_1'\bI } \preceq \bA \bS (\bA \bS)^\top + \lambda_1'\bI \preceq (1+\varepsilon)\paren*{ \bA\bA^\top + \lambda_1'\bI }
\end{equation}

Recall that $\bP = \bP_{\bB\bOmega}$, where $\bP_{\bB\bOmega}$ is orthogonal projection onto the column span of $\bB\bOmega$ and $\bB = \paren*{\bA\bS (\bA\bS)^\top}^q \bA\bS$. Instead of working with $\bB$, we will instead work with a scaled version of $\bB$ defined as a power of a scaled version of $\bA \bS$. Let $\bBt := \frac{1}{\theta^{2q+1}} \paren*{\bA\bS (\bA\bS)^\top}^q \bA\bS$ for $\theta=\sqrt{\lambda_1'}$. This is not a problem since $\bP_{\bB\bOmega} = \bP_{\bBt\bOmega}$. 

If $\bB\bOmega$ is a $\lambda_2$-regularized $1/2$-spectral approximation of $\bB$, then  $\bBt\bOmega$ is a $\lambda_2/\theta^{2(2q+1)}$-regularized $1/2$-spectral approximation of $\bBt$. Since $\theta^{2(2q+1)} = \lambda_1'^{2q+1} \ge \lambda_2$, we may assume that $\bBt\bOmega$ is a $1$-regularized $1/2$-spectral approximation of $\bBt$.

Thus, by the proof of Lemma \ref{lem:projspecapprox}, $\bI-\bP  \preceq 2(\bBt\bBt^\top + \bI)^{-1} $.
 
For any fixed $i$, this gives $\bu_i^\top(\bI-\bP)\bu_i \le 2\,\bu_i^\top(\bBt\bBt^\top + \bI)^{-1}\bu_i$. We now bound $\bu_i^\top(\bBt\bBt^\top + \bI)^{-1}\bu_i$, 
 \begin{align*}
     \bu_i^\top(\bBt\bBt^\top + \bI)^{-1}\bu_i
     &= \bu_i^\top \bW\paren*{\frac{1}{\theta^{2(2q+1)}}\bD^{2q+1} + \bI}^{-1}\bW^\top\bu_i \\
     &= \sum_{j:\sigma_j(\bA\bS)\ge \theta} \frac{(\bW^\top\bu_i)_j^2}{\paren*{\sigma_j(\bA\bS)/\theta}^{2(2q+1)} + 1} \\
     &\quad + \sum_{j:\sigma_j(\bA\bS)< \theta} \frac{(\bW^\top\bu_i)_j^2}{\paren*{\sigma_j(\bA\bS)/\theta}^{2(2q+1)} + 1},
     \\
     &\le \sum_{j:\sigma_j(\bA\bS)\ge \theta} \frac{(\bW^\top\bu_i)_j^2}{\paren*{\sigma_j(\bA\bS)/\theta}^{2}} + \sum_{j:\sigma_j(\bA\bS)< \theta} \frac{(\bW^\top\bu_i)_j^2}{1} \\
     &= \theta^2 \paren*{ \sum_{j:\sigma_j(\bA\bS)\ge \theta} \frac{(\bW^\top\bu_i)_j^2}{\sigma_j(\bA\bS)^{2}} + \sum_{j:\sigma_j(\bA\bS)< \theta} \frac{(\bW^\top\bu_i)_j^2}{\theta^2} } \\
     &\le \theta^2 \paren*{ \sum_{j:\sigma_j(\bA\bS)\ge \theta} \frac{(\bW^\top\bu_i)_j^2}{\sigma_j(\bA\bS)^{2}} + \sum_{j:\sigma_j(\bA\bS)< \theta} \frac{(\bW^\top\bu_i)_j^2}{\theta^2} }
 \end{align*}
 where $\bA\bS(\bA\bS)^\top = \bW\bD\bW^\top$ is the eigendecomposition with $\bD = \diag(\sigma_1(\bA\bS)^2, \ldots, \sigma_{\widehat{m}}(\bA\bS)^2)$ (for $ \widehat{m} := \min\{ m, r_1 \}$), so $\bBt\bBt^\top = \theta^{-2(2q+1)}\bW\bD^{2q+1}\bW^\top$. Also, by \eqref{eq:lambda1primeapprox}, 
 \[
     \paren*{\bA\bS(\bA\bS)^\top + \lambda_1'\bI}^{-1} \preceq \frac{1}{1-\varepsilon}\paren*{\bA\bA^\top + \lambda_1'\bI}^{-1}.
 \]
 Hence, for each left singular vector $\bu_i$ of $\bA$,
 \[
     \bu_i^\top \paren*{\bA\bS(\bA\bS)^\top + \lambda_1'\bI}^{-1} \bu_i
     \le \frac{1}{1-\varepsilon}\,\bu_i^\top \paren*{\bA\bA^\top + \lambda_1'\bI}^{-1}\bu_i
     = \frac{1}{(1-\varepsilon)(\sigma_i(\bA)^2 + \lambda_1')}
 \]
 Equivalently, using the eigendecomposition of $\bA\bS(\bA\bS)^\top$,
 \begin{align*}
     \bu_i^\top \paren*{\bA\bS(\bA\bS)^\top + \lambda_1'\bI}^{-1} \bu_i
     &= \bu_i^\top \bW(\bD + \lambda_1'\bI)^{-1}\bW^\top \bu_i \\
     &= \sum_{j:\sigma_j(\bA\bS)^2 \ge \lambda_1'} \frac{(\bW^\top\bu_i)_j^2}{\sigma_j(\bA\bS)^2 + \lambda_1'}
     + \sum_{j:\sigma_j(\bA\bS)^2 < \lambda_1'} \frac{(\bW^\top\bu_i)_j^2}{\sigma_j(\bA\bS)^2 + \lambda_1'}
     \\
     &\ge \sum_{j:\sigma_j(\bA\bS)^2 \ge \lambda_1'} \frac{(\bW^\top\bu_i)_j^2}{2\sigma_j(\bA\bS)^2}
     + \sum_{j:\sigma_j(\bA\bS)^2 < \lambda_1'} \frac{(\bW^\top\bu_i)_j^2}{2\lambda_1'} \\
     2 \cdot \bu_i^\top \paren*{\bA\bS(\bA\bS)^\top + \lambda_1'\bI}^{-1} \bu_i &\ge \sum_{j:\sigma_j(\bA\bS)^2 \ge \lambda_1'} \frac{(\bW^\top\bu_i)_j^2}{\sigma_j(\bA\bS)^2} + \sum_{j:\sigma_j(\bA\bS)^2 < \lambda_1'} \frac{(\bW^\top\bu_i)_j^2}{\lambda_1'} \\
 \end{align*}

 Thus, we have,
 \begin{align*}
    \sum_{j:\sigma_j(\bA\bS)^2 \ge \lambda_1'} \frac{(\bW^\top\bu_i)_j^2}{\sigma_j(\bA\bS)^2} + \sum_{j:\sigma_j(\bA\bS)^2 < \lambda_1'} \frac{(\bW^\top\bu_i)_j^2}{\lambda_1'} \le \frac{2}{(1-\varepsilon)(\sigma_i(\bA)^2 + \lambda_1')}
 \end{align*}
 Since $\theta^2 = \lambda_1'$, we get,
 \begin{align*}
    \bu_i^\top(\bBt\bBt^\top + \bI)^{-1}\bu_i \le \frac{2\theta^2}{(1-\varepsilon)(\sigma_i(\bA)^2 + \lambda_1')}
 \end{align*}
 Thus,
 \begin{align*}
    \norm{(\bI-\bP)\bA_r}^2_F &= \sum_{i=1}^r \sigma_i(\bA)^2\, \bu_i^\top(\bI-\bP)\bu_i \\
    &\le \sum_{i=1}^r \sigma_i(\bA)^2\, 2\,\bu_i^\top(\bBt\bBt^\top + \bI)^{-1}\bu_i \\
    &\le \sum_{i=1}^r \sigma_i(\bA)^2\, 2 \cdot \frac{2\theta^2}{(1-\varepsilon)(\sigma_i(\bA)^2 + \lambda_1')} \\
    &\le \frac{4r \theta^2  }{(1-\varepsilon)}
 \end{align*}

 Combining with the bound for $\norm{(\bI-\bP)\bA_{>r}}^2_F$, we get,
 \begin{align*}
        \norm{(\bI - \bP)\bA}^2_F
        &= \norm{(\bI-\bP)\bA_r}^2_F + \norm{(\bI-\bP)\bA_{>r}}^2_F \\
        &\le \sum_{i>r} \sigma^2_{i}(\bA) + {8r(\lambda_1 + \lambda_2^{1/2q+1}) } 
\end{align*}
when $\e < 1/2$.

\section{Additional Experiments} \label{app:experiments}

In this section, we compare the numerical performance of Sketch-Powered Low-Rank Factorization (Algorithm \ref{alg:generalised_nystrom}) against the version of Algorithm \ref{alg:generalised_nystrom} that does iterations without sketching, i.e. when $\bS_1 = \bI$. This baseline (denoted simply as ``Low-Rank Factorization" in our plots) precisely matches the so-called Generalized Nystr\"om algorithm as described, e.g., by \cite{nakatsukasa2020fast}, except this algorithm is stated there without power iteration (i.e., $q=0$). Generalized Nystr\"om is designed to avoid the expensive matrix multiplication step $\bQ^\top\bA$ that occurs in Randomized SVD. Because of this, prior work would have little reason to incorporate full power method into this procedure, as that would introduce even more expensive matrix multiplications, erasing most of this gain. Nevertheless, for completeness, in our experiments we show the performance of this baseline for a range of $q\geq 0$.

We consider the same target rank $k=40$ and the same matrices \texttt{smallnorb}, \texttt{ImageNet} and \texttt{polydecay} as described in Section \ref{s:experiments} and, as earlier, plot the average relative error with respect to the average time taken for a particular iterate in Figure \ref{fig:gn-experiments} for $l \in\{800, 1200, 1600\}$. Note that in this experiment the baseline algorithm also depends on the parameter $r_1=l$, which corresponds to the size of the sketching matrix $\bS_2$.

Similar to what we observed in Figure \ref{fig:experiments} for Sketch-Powered Randomized SVD, Algorithm \ref{alg:generalised_nystrom} reaches a relative error $\approx\!0.1$ faster than the baseline algorithm, and then at a certain point the error starts to plateau. Increasing the intermediate sketch size $l$ decreases the level at which the relative error plateaus, as predicted by Theorem \ref{thm:generalised_nystrom}.

\begin{figure}[t]
    \centering
    \begin{minipage}{0.32\linewidth}
        \centering
        \includegraphics[width=\linewidth]{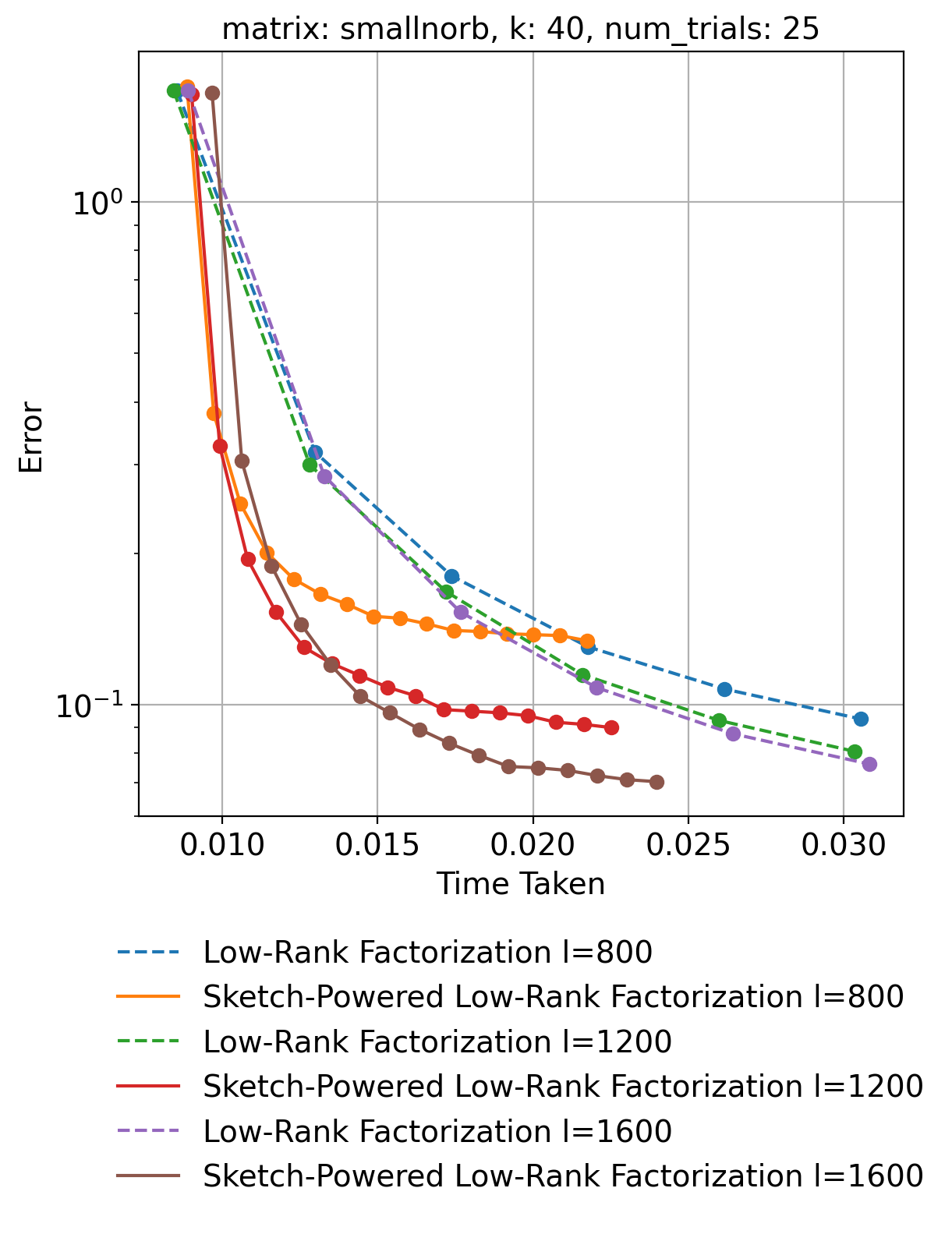}
    \end{minipage}
    \begin{minipage}{0.32\linewidth}
        \centering
        \includegraphics[width=\linewidth]{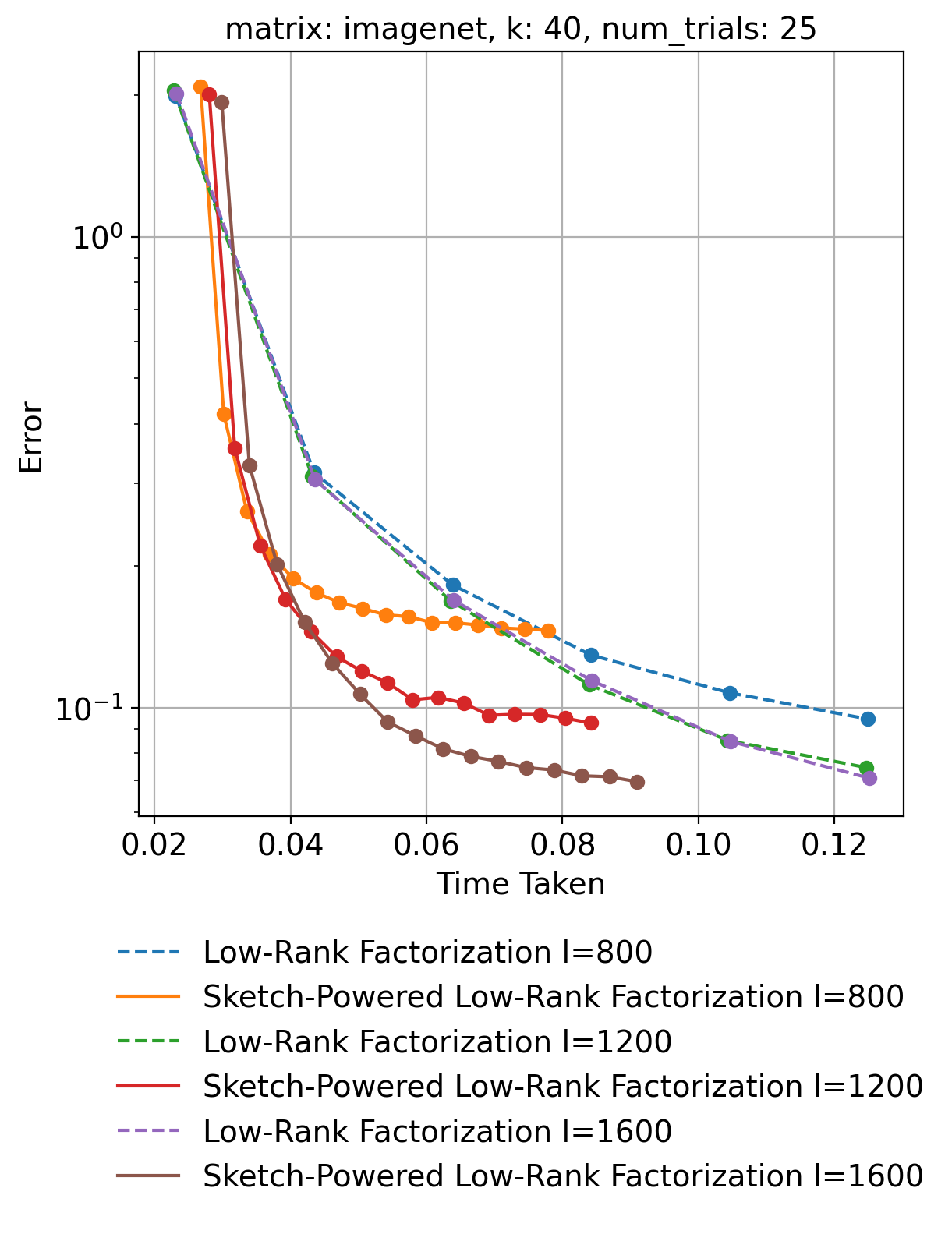}
    \end{minipage}\hfill
        \begin{minipage}{0.32\linewidth}
        \centering
        \includegraphics[width=\linewidth]{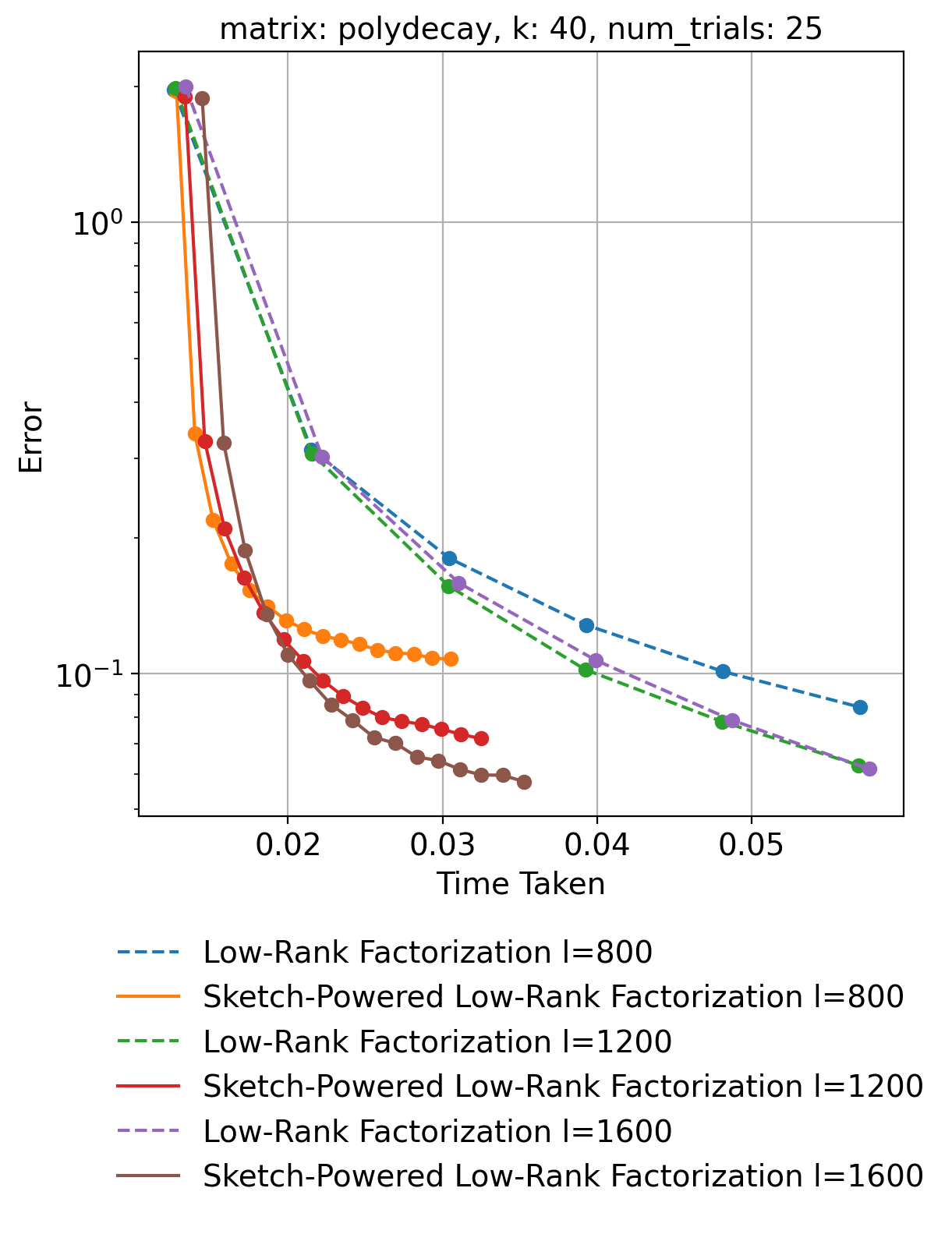}
    \end{minipage}\hfill
    \caption{Approximation error vs time for Sketch-Powered Low-Rank Factorization (Algorithm \ref{alg:generalised_nystrom}) compared with the same algorithm used without sketching the power method, for (from left to right) \texttt{smallnorb}, \texttt{ImageNet}, and \texttt{polydecay} matrices.}
    \label{fig:gn-experiments}
\end{figure}

\section{Miscellaneous Lemmas}

\subsection{Error Term Estimate}
\begin{lemma}\label{lem:lambda2bound}
    For $\bA \in \R^{m \times n}$, let $\bA\bS$ be a $\lambda_1$-regularized $\e$-spectral approximation of $\bA$ for some $\e\in[0,1/2]$ and let $\widehat{m} := \rank(\bA\bS)$. Then, for $k>0$ and $q \ge \log(2\widehat{m})/2\varepsilon$,
    \[ \paren*{ \frac{2}{k} \sum_{i>k} \sigma_i(\bA \bS)^{2(2q+1)} }^\frac{1}{2q+1} \le (1+4\e)\sigma_{k+1}(\bA)^2 + 2\lambda_1\e \]
\end{lemma}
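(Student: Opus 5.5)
Our plan is to first convert the spectral approximation into an entrywise bound on the singular values of $\bA\bS$, and then show that the $\ell_{2q+1}$-type average is essentially a maximum once $q$ is large.

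\emph{Singular value bound.} From the upper inequality in the definition, together with Weyl/Courant--Fischer monotonicity of eigenvalues under the Loewner order, we get $\sigma_i^2(\bA\bS)+\lambda_1\le(1+\e)(\sigma_i^2(\bA)+\lambda_1)$ for each $i$. Hence
\begin{align*}
\sigma_i(\bA\bS)^2\le a_i:=(1+\e)\sigma_i^2(\bA)+\e\lambda_1 .
\end{align*}
This is the same bound as \eqref{eq:s1specapprox}. The sequence $a_i$ is nonincreasing, so $a_i\le a_{k+1}$ for all $i>k$.

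\emph{Bounding the sum.} Only indices $i\le\widehat m$ contribute, since $\sigma_i(\bA\bS)=0$ for $i>\widehat m$. So at most $\widehat m$ terms appear, each at most $a_{k+1}^{2q+1}$. This gives
\begin{align*}
\Big(\frac2k\sum_{i>k}\sigma_i(\bA\bS)^{2(2q+1)}\Big)^{\frac1{2q+1}}\le\Big(\frac{2\widehat m}{k}\Big)^{\frac1{2q+1}}a_{k+1}\le(2\widehat m)^{\frac1{2q+1}}a_{k+1}.
\end{align*}
The last step uses $k\ge1$. If $k$ is allowed to be a non-integer in $(0,1)$, we would instead keep the factor $2\widehat m/k$ and require $q$ large relative to $\log(2\widehat m/k)$. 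I take $k\ge1$, as in all the applications.

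\emph{Controlling the prefactor.} With $q\ge\log(2\widehat m)/(2\e)$ we have $2q+1\ge\log(2\widehat m)/\e$, and therefore
\begin{align*}
(2\widehat m)^{\frac1{2q+1}}=\exp\Big(\frac{\log(2\widehat m)}{2q+1}\Big)\le e^{\e}\le1+2\e \qquad\text{for }\e\le1/2.
\end{align*}
Putting the pieces together,
\begin{align*}
(1+2\e)a_{k+1}=(1+2\e)(1+\e)\sigma_{k+1}^2(\bA)+(1+2\e)\e\lambda_1 .
\end{align*}
For $\e\le1/2$ we have $(1+2\e)(1+\e)=1+3\e+2\e^2\le1+4\e$ and $(1+2\e)\e\le2\e$. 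This yields $(1+4\e)\sigma_{k+1}(\bA)^2+2\lambda_1\e$, as claimed.

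The only delicate step is the first one: passing from the Loewner inequality to the ordered eigenvalue inequality $\lambda_i(\bA\bS\bS^\top\bA^\top+\lambda_1\bI)\le(1+\e)\lambda_i(\bA\bA^\top+\lambda_1\bI)$. This is a standard consequence of Courant--Fischer. Everything after it is elementary bookkeeping; the only care needed is to check that the constants $e^\e\le1+2\e$ and $(1+2\e)(1+\e)\le1+4\e$ hold on the whole range $\e\in[0,1/2]$.
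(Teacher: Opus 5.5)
Your proof is correct and takes essentially the same route as the paper: you turn the Loewner bound into $\sigma_i(\bA\bS)^2 \le (1+\e)\sigma_i(\bA)^2 + \e\lambda_1$, then bound the $\ell_{2q+1}$-average by its largest term times $(2\widehat m)^{1/(2q+1)} \le e^{\e} \le 1+2\e$. The only difference is that the paper first uses Minkowski's inequality to split off the $\lambda_1$ part and bounds it separately, while you bound the combined term by $a_{k+1}$ directly, which is slightly cleaner and gives the same constants.
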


\begin{proof}
By the spectral approximation guarantee,
\begin{align*}
    (1-\varepsilon)\paren*{ \bA\bA^\top + \lambda_1\bI } \preceq \bA \bS (\bA \bS)^\top &+ \lambda_1\bI \preceq (1+\varepsilon)\paren*{ \bA\bA^\top + \lambda_1\bI } \\
    (1-\varepsilon)\paren*{ \sigma_i(\bA)^2 + \lambda_1 } \le \sigma_i(\bA \bS)^2 &+ \lambda_1 \le (1+\varepsilon)\paren*{ \sigma_i(\bA)^2 + \lambda_1 } 
\end{align*}
So, for $\widehat{m} = \rank(\bA\bS)$,
\begin{align*}
    \paren*{\sum_{i>k} \sigma_i(\bA \bS)^{2(2q+1)} }^\frac{1}{2q+1} &\le \paren*{\sum_{i=k+1}^{\widehat{m}} \paren*{(1+\varepsilon) \sigma_i(\bA)^2  + \lambda_1\varepsilon }^{(2q+1)} }^\frac{1}{2q+1} \\
    &\le (1+\varepsilon) \paren*{\sum_{i=k+1}^{\widehat{m}} \sigma_i(\bA)^{2(2q+1)} }^\frac{1}{2q+1} + \lambda_1 \e \widehat{m}^{{1}/{2q+1}} \\
    \paren*{ \frac{2}{k} \sum_{i>k} \sigma_i(\bA \bS)^{2(2q+1)} }^\frac{1}{2q+1} &\le (1+\varepsilon) \paren*{ \frac{2}{k} \sum_{i=k+1}^{\widehat{m}} \sigma_i(\bA)^{2(2q+1)} }^\frac{1}{2q+1} + \lambda_1 \e \paren*{\frac{2\widehat{m}}{k}}^{{1}/{2q+1}}
\end{align*}
  Then, for $q = \log(2\widehat{m})/2\varepsilon$, $\paren*{{\widehat{2m}}/{k}}^{{1}/{2q+1}} \le (2\widehat{m})^{(\e/\log(2\widehat{m}))} \le e^\e \le 1+2\e \le 2$ (since $\e \le 1/2$). 
Also,
\begin{align*}
    \paren*{ \frac{2}{k} \sum_{i=k+1}^{\widehat{m}} \sigma_i( \bA)^{2(2q+1)} }^\frac{1}{2q+1}
    &\le \sigma_{k+1}(\bA)^2 \paren*{ \frac{2}{k} \sum_{i=k+1}^{\widehat{m}} \frac{\sigma_i( \bA)^{2(2q+1)}}{\sigma_{k+1}( \bA)^{2(2q+1)}}}^\frac{1}{2q+1} \\
    &\le \sigma_{k+1}(\bA)^2 (2\widehat{m})^\frac{1}{2q} \\
    &\le (1+2\e)\sigma_{k+1}(\bA)^2
\end{align*}
where we used that $\sigma_i(\bA) \le \sigma_{k+1}(\bA)$ for all $i > k$. Thus,
\begin{align*}
    \paren*{ 2\lambda_2 }^{\frac{1}{(2q+1)}} &= \paren*{ \frac{2}{k} \sum_{i>k} \sigma_i(\bA \bS)^{2(2q+1)} }^\frac{1}{2q+1} \\
    &\le (1+\e)(1+2\e)\sigma_{k+1}(\bA)^2 + 2\lambda_1\e \\
    &\le (1+4\e)\sigma_{k+1}(\bA)^2 + 2\lambda_1\e
\end{align*}
for $\e<1/2$. 
\end{proof}

\subsection{Proof of Lemma \ref{lem:specapprox} and Lemma \ref{lem:genreg}} 
\label{subsec:ammproofs}
    Lemma \ref{lem:specapprox} and Lemma \ref{lem:genreg} rely on the oblivious subspace embedding (OSE) moments  property defined in \cite{cohen_nelson_woodruff},
    
    \begin{definition}[OSE moments restated for right sketches, \cite{cohen_nelson_woodruff}, Definition 4] \label{def:osemoments}
        A distribution $\mathcal{D}$ over sketching matrices $\bS\in\R^{n\times r}$ has $(\e,\delta,d,p)$-OSE moments if, for every matrix $\bU\in\R^{n\times d}$ with orthonormal columns,
        \[
            \E_{\bS\sim\mathcal{D}} \norm*{\bU^\top\bS\bS^\top\bU-\bI_d}^{p} < \e^{p}\delta.
        \]
    \end{definition}

    By the discussion in Sections 2.1.1-2.1.3 of \cite{cohen_nelson_woodruff}, we note that the sketching matrices listed in Lemma~\ref{lem:specapprox} satisfy the $(\e,\delta,2k,p)$-OSE moment property for some $p\ge 2$ 
    (Going from $k$ to $2k$ only influences the sketch size lower bounds in Lemma~\ref{lem:specapprox} by constants, which get absorbed in the $O(\cdot)$ notation). We then have the following theorem for the sketching matrices listed in Lemma~\ref{lem:specapprox}, 

    \begin{theorem}[Definition 2, Theorem~6, \cite{cohen_nelson_woodruff}] \label{thm:amm}
        If $\bS\in\R^{n\times r}$ is one of the matrices listed in Lemma~\ref{lem:specapprox}, then $\bS$ satisfies the $(\e,\delta,2k,p)$-OSE moments property for some $p\ge 2$. Thus, with probability at least $1-\delta$, $\bS^\top$ satisfies the $(k,\e)$-approximate spectral norm matrix multiplication (AMM) property, where we say that $\bS^\top \in\R^{r\times n}$ satisfies the $(k,\e)$-AMM property if, for every pair of matrices $\bM,\mathbf{N}$ with $n$ rows,
        \[
            \norm*{(\bS^\top\bM)^\top(\bS^\top\mathbf{N})-\bM^\top\mathbf{N}}
            \le \e\sqrt{
            \paren*{\norm{\bM}^2+\frac{\norm{\bM}_F^2}{k}}
            \paren*{\norm{\mathbf{N}}^2+\frac{\norm{\mathbf{N}}_F^2}{k}}
            }.
        \]
    \end{theorem}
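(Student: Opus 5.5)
The statement to prove is Theorem~\ref{thm:amm}, which has two parts. First, the sketches of Lemma~\ref{lem:specapprox} satisfy the $(\e,\delta,2k,p)$-OSE moments property. Second, OSE moments imply the $(k,\e)$-AMM property with probability $1-\delta$.

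For the first part, I take the moment bounds from Sections 2.1.1--2.1.3 of \cite{cohen_nelson_woodruff} for each family: sub-Gaussian, CountSketch/OSNAP with sparsity $s$, and SRHT. Replacing the dimension $k$ by $2k$ only changes constants in the stated sketch sizes. So this part is essentially bookkeeping.

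The real content is the implication from moments to AMM, and I plan to follow the argument of \cite{cohen_nelson_woodruff}. I proceed in four steps.

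\begin{enumerate}
\item \textbf{Normalize.} By homogeneity, rescale so that $\norm{\bM}^2+\norm{\bM}_F^2/k = 1$, and likewise for $\mathbf{N}$. Stack them as $\bX=[\bM,\ \mathbf{N}]$. Then
\[
(\bS^\top\bM)^\top\bS^\top\mathbf{N}-\bM^\top\mathbf{N}
\]
is an off-diagonal block of $\bX^\top\bS\bS^\top\bX-\bX^\top\bX$. Its norm is therefore bounded by $\norm{\bX^\top(\bS\bS^\top-\bI)\bX}$. Moreover $\norm{\bX}^2+\norm{\bX}_F^2/k\le 2$ up to constants.

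\item \textbf{Reduce to the case $\norm{\bX}^2\le 1/k$ per block.} Bucket the columns of $\bX$ (after a rotation, the right singular directions) into at most about $2k$ groups. For a fixed group, the squared singular values contributing beyond the top $k$ directions are controlled by $\norm{\bX}_F^2/k$.

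Concretely, write $\bX=\bX_{k}+\bX_{\setminus k}$, where $\bX_k$ is the top-$k$ SVD part. The head term $\bX_k^\top(\bS\bS^\top-\bI)\bX_k$ involves a $k$-dimensional orthonormal basis, so the moment bound plus Markov's inequality gives a deviation of at most $\e\norm{\bX_k}^2$.

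\item \textbf{Bound the tail.} Split the tail $\bX_{\setminus k}$ into column blocks (after right rotation) $\bX_1,\dots,\bX_t$. Each block has $\norm{\bX_j}\lesssim\sqrt{\norm{\bX}_F^2/k}$ and rank at most $k$.

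The cross terms and block-diagonal terms are then controlled through the $p$-th moment. Apply the OSE moment bound to the orthonormal basis of $[\bX_k,\bX_j]$, which has rank at most $2k$ — this is exactly why $2k$ appears. Combine the blocks using the triangle inequality in $L_p$ (Minkowski) on the random operator norms.

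\item \textbf{Convert to probability.} Markov's inequality on the $p$-th moment yields failure probability $\delta$. Finally, undo the scaling.
\end{enumerate}

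The main obstacle is Step 3: summing over tail blocks without losing a factor equal to the number of blocks $t$. Na\"ively, each block contributes $\e\norm{\bX_j}^2$, and summing gives $\e\norm{\bX}_F^2/k$ only if the contributions add in squared norms rather than operator norms.

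I would handle this as in \cite{cohen_nelson_woodruff}. Write
\[
\norm{\bX_{\setminus k}^\top(\bS\bS^\top-\bI)\bX_{\setminus k}} \le \norm{\bS^\top\bX_{\setminus k}}^2+\norm{\bX_{\setminus k}}^2 .
\]
Then bound $\norm{\bS^\top\bX_{\setminus k}}^2\le\sum_j\norm{\bS^\top\bX_j}^2$, where each term is at most $(1+\e)\norm{\bX_j}^2$, giving a total of $O(\norm{\bX}_F^2/k)$.

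This yields an $O(1)$ rather than an $\e$ factor on the tail. To recover the $\e$, I would choose the blocks with $\norm{\bX_j}^2\le \e\norm{\bX}_F^2/k$, at the cost of $k/\e$ blocks. Alternatively, if that fails, I would cite Theorem~6 of \cite{cohen_nelson_woodruff} directly for this step.
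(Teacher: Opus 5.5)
The paper does not actually prove this statement; it only cites it. The moment property is taken from Sections 2.1.1--2.1.3 of \cite{cohen_nelson_woodruff}, and the implication is their Theorem~6. So your first part, and your fallback (``cite Theorem~6 directly''), are exactly what the paper does.

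Read as a self-contained derivation, however, your Step~3 does not close, for three reasons.
\begin{itemize}
\item \emph{No factor of $\e$.} Your tail bound $\norm{\bS^\top\bX_{\setminus k}}^2+\norm{\bX_{\setminus k}}^2$ is of order $\norm{\bX}_F^2/k$ with no $\e$ in front.
\item \emph{The repair exceeds the hypothesis.} Forcing $\sum_j\norm{\bX_j}^2\le\e\norm{\bX}_F^2/k$ means cutting the spectrum after roughly $k/\e$ directions. Controlling $\bS$ on blocks of that rank needs moment or subspace-embedding bounds in dimension about $k/\e$, and $(\e,\delta,2k,p)$-moments do not provide them.
\item \emph{Missing cross terms.} You pair each tail block only with the head, via $[\bX_k,\bX_j]$. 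The off-diagonal tail--tail blocks $\bX_i^\top(\bS\bS^\top-\bI)\bX_j$ are never controlled. Adding them up by the triangle inequality on operator norms gives $\e\big(\sum_i\norm{\bX_i}\big)^2$, which is precisely where the factor $t$ is lost.
\end{itemize}

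The missing idea, which is the argument behind Theorem~6 of \cite{cohen_nelson_woodruff}, is to keep every error in squared form.
\begin{itemize}
\item After a right rotation, write $\bX=\bU\bSigma$. Split it into blocks $\bU_1\bSigma_1,\bU_2\bSigma_2,\dots$ of exactly $k$ consecutive singular directions.
\item Then $\norm{\bSigma_i}^2\le\norm{\bSigma_{i-1}}_F^2/k$, which telescopes to $\sum_i\norm{\bSigma_i}^2\le\norm{\bX}^2+\norm{\bX}_F^2/k$. Your per-block bound $\norm{\bX_j}^2\lesssim\norm{\bX}_F^2/k$ alone does not give this.
\item The $(i,j)$ block of $\bX^\top(\bS\bS^\top-\bI)\bX$ is $\bSigma_i\bU_i^\top(\bS\bS^\top-\bI)\bU_j\bSigma_j$. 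Its norm is at most $\norm{\bSigma_i}\norm{\bSigma_j}E_{ij}$, where $E_{ij}:=\norm{[\bU_i,\bU_j]^\top(\bS\bS^\top-\bI)[\bU_i,\bU_j]}$. This uses a $2k$-column orthonormal basis for every pair $i,j$, not just head--tail pairs.
\item The operator norm of a block matrix is at most the Frobenius norm of its matrix of block norms. Hence
\[
\norm{\bX^\top(\bS\bS^\top-\bI)\bX}^2\le\sum_{i,j}\norm{\bSigma_i}^2\norm{\bSigma_j}^2E_{ij}^2 .
\]
\item Apply Minkowski's inequality in $L^{p/2}$ (this is where $p\ge2$ is needed) together with $\E E_{ij}^p<\e^p\delta$. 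This bounds the $L^{p/2}$ norm of the right-hand side by $\e^2\delta^{2/p}\big(\sum_i\norm{\bSigma_i}^2\big)^2$, which does not depend on the number of blocks.
\item Markov's inequality, combined with your Step~1 stacking and normalization, finishes the proof up to a constant rescaling of $\e$.
\end{itemize}

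One more point about the statement itself. This argument, like your Markov step, proves AMM only for each fixed pair $\bM,\mathbf{N}$. That fixed-pair form is what \cite{cohen_nelson_woodruff} uses and all the paper ever needs. The ``for every pair'' version written in the statement cannot hold when $r<n$ and $\e<1/2$. To see this, take $\bM=\mathbf{N}$ to be a unit vector in the kernel of $\bS^\top$: the left side equals $1$, while the right side is $\e(1+1/k)<1$.
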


    Using this property, we can reproduce the proof of \cite[Lemma 12]{derezinskisidford} to get a proof of Lemma \ref{lem:specapprox},

    \begin{proof}[Proof of Lemma \ref{lem:specapprox}]
        Let $\lambda = \frac{1}{k}\sum_{i>k}\sigma_i^2(\bA)$ and define
        \[
            \bC := \big(\bA\bA^\top+\lambda\bI\big)^{-1/2}\bA.
        \]
        As in the proof of \cite[Lemma~12]{derezinskisidford}, it suffices to control the normalized sketching error:
        \begin{align*}
            (1-\e)\big(\bA\bA^\top+\lambda\bI\big)
            \preceq\;& \bA\bS\bS^\top\bA^\top+\lambda\bI
            \preceq (1+\e)\big(\bA\bA^\top+\lambda\bI\big) \\
            \iff\;&
            \big\|\bC\bS\bS^\top\bC^\top-\bC\bC^\top\big\|\le \e.
        \end{align*}
        By Theorem \ref{thm:amm}, after adjusting the constant hidden in the sketch size lower bound, $\bS^\top$ satisfies the $(k,\e/3)$-AMM property with probability at least $1-\delta$. Applying AMM with $\bM=\mathbf{N}=\bC^\top$ gives
        \begin{equation} \label{eq:multreduction}
                        \norm*{\bC\bS\bS^\top\bC^\top-\bC\bC^\top}
            \le \frac{\e}{3}\paren*{\norm{\bC}^2+\frac{\norm{\bC}_F^2}{k}}.
        \end{equation}
        Since $\norm{\bC}\le 1$ and
        \begin{align*}
            \norm{\bC}_F^2
            &= \sum_{i=1}^{\rank(\bA)}\frac{\sigma_i^2(\bA)}{\sigma_i^2(\bA)+\lambda} \\
            &\le k+\sum_{i>k}\frac{\sigma_i^2(\bA)}{\lambda}
            =2k,
        \end{align*}
        the right-hand side of \eqref{eq:multreduction} is at most $\e$. This proves the regularized spectral approximation guarantee. The stated sketch-size bounds follow from the OSE moment bounds in Sections 2.1.1-2.1.3 of \cite{cohen_nelson_woodruff}, with constant factors absorbed into the $O(\cdot)$ notation.
    \end{proof}

    For proving lemma \ref{lem:genreg}, we consider another theorem of \cite{cohen_nelson_woodruff} restated for right sketches, 

    \begin{theorem}[Theorem 7, \cite{cohen_nelson_woodruff}] \label{thm:genregcnw}
        Let $\bA\in\R^{n\times d},\bB\in\R^{n\times p}$ and $\bS\in\R^{n\times r}$, then for $\bXt :=(\bS^\top \bA)^\dagger \bS^\top\bB$, we have, 
            \[
        \norm{\bA\bXt-\bB}^2
        \le (1+\nu)\norm{\bP_{\bA}\bB-\bB}^2 + \frac{\nu}{k}\norm{\bP_{\bA}\bB-\bB}_F^2
    \] if, 
    \begin{enumerate}
        \item $\bS^\top$ satisfies the $(k,\sqrt{\nu/8})$-AMM property for $\bU_{\bA}$ and $\bP_{\bar{\bA}}\bB$, where $\bU_{\bA}$ is an orthonormal basis for the column space of $\bA$, $\bP_{\bA}$ is the orthogonal projection onto the column space of $\bA$,  and $\bP_{\bar{\bA}}:=\bI-\bP_{\bA}$; \label{cond:amm}
        \item $\bS^\top$ s a (1/2)-subspace embedding for the column space of $\bA$. \label{cond:se}
    \end{enumerate}
    \end{theorem}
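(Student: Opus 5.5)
The plan is the standard ``sketch-and-solve'' argument: split the sketched solution into the exact least-squares solution plus a correction, and bound the correction using the two hypotheses. Let $\bX^* := \bA^\dagger\bB$, so that $\bA\bX^* = \bP_{\bA}\bB$, and let $\mathbf{R} := \bB - \bA\bX^* = \bP_{\bar\bA}\bB$ be the optimal residual. Its columns are orthogonal to the column space of $\bA$, so $\bU_{\bA}^\top\mathbf{R} = 0$. Write $\bA\bXt - \bB = \bA(\bXt-\bX^*) - \mathbf{R}$. The first term has range inside $\mathrm{col}(\bA)$ and $\mathbf{R}$ has range orthogonal to it. Hence for every unit vector $\by$,
\[
\|(\bA(\bXt-\bX^*) - \mathbf{R})\by\|^2 = \|\bA(\bXt-\bX^*)\by\|^2 + \|\mathbf{R}\by\|^2 .
\]
Taking the supremum over $\by$ gives $\|\bA\bXt-\bB\|^2 \le \|\bA(\bXt-\bX^*)\|^2 + \|\mathbf{R}\|^2$. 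It therefore suffices to show
\[
\|\bA(\bXt-\bX^*)\|^2 \le \nu\paren*{\|\mathbf{R}\|^2 + \tfrac1k\|\mathbf{R}\|_F^2}.
\]
Note that $\|\mathbf{R}\| = \|\bP_{\bA}\bB - \bB\|$, so this matches the claimed bound.

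Next we get an explicit formula for the correction. Write $\bA = \bU_{\bA}\mathbf{T}$, where $\mathbf{T}$ has full row rank $d' = \rank(\bA)$. Condition~\ref{cond:se} says the singular values of $\bS^\top\bU_{\bA}$ lie in $[1/\sqrt2,\sqrt{3/2}]$. So $\bS^\top\bU_{\bA}$ has full column rank, and $\mathrm{col}(\bS^\top\bA) = \mathrm{col}(\bS^\top\bU_{\bA})$. Substitute $\bS^\top\bB = \bS^\top\bA\bX^* + \bS^\top\mathbf{R}$ into the definition of $\bXt$. Since $\bS^\top\bA\,(\bS^\top\bA)^\dagger$ is the projection onto $\mathrm{col}(\bS^\top\bU_{\bA})$ and $\bS^\top\bU_{\bA}$ is injective, we obtain
\[
\bA\bXt = \bA\bX^* + \bU_{\bA}(\bS^\top\bU_{\bA})^\dagger\bS^\top\mathbf{R}.
\]
Here $(\bS^\top\bU_{\bA})^\dagger = (\bU_{\bA}^\top\bS\bS^\top\bU_{\bA})^{-1}\bU_{\bA}^\top\bS$. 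Thus
\[
\|\bA(\bXt-\bX^*)\| \le \|(\bU_{\bA}^\top\bS\bS^\top\bU_{\bA})^{-1}\|\cdot\|\bU_{\bA}^\top\bS\bS^\top\mathbf{R}\|.
\]
By the subspace embedding, the first factor is at most $2$.

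For the second factor we use Condition~\ref{cond:amm} with $\bM = \bU_{\bA}$ and $\mathbf{N} = \mathbf{R}$. Because $\bU_{\bA}^\top\mathbf{R}=0$, AMM gives
\[
\|\bU_{\bA}^\top\bS\bS^\top\mathbf{R}\|^2 \le \tfrac{\nu}{8}\paren*{1 + \tfrac{d'}{k}}\paren*{\|\mathbf{R}\|^2 + \tfrac1k\|\mathbf{R}\|_F^2}.
\]
Combining, $\|\bA(\bXt-\bX^*)\|^2 \le 4\cdot\frac{\nu}{8}\cdot(1+d'/k)(\cdots)$. When $d'\le k$ this is at most $\nu(\|\mathbf{R}\|^2+\|\mathbf{R}\|_F^2/k)$, as required. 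The case $d' \le k$ is the regime of Lemma~\ref{lem:genreg}, which assumes $k\ge d$. I would state this assumption explicitly.

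The main obstacle is the algebra for $\bA\bXt$ when $\bA$ is rank-deficient. In that case $\bXt$ need not equal $\bX^*$ plus a clean correction, since $(\bS^\top\bA)^\dagger\bS^\top\bA$ is only the projection onto the row space of $\bA$. I would handle this by working throughout with the products $\bA\bXt$ and $\bA\bX^*$, where the row-space projection is harmless because $\bA(\bS^\top\bA)^\dagger\bS^\top\bA = \bA$. I would also keep track of the factor $(1+d'/k)$, which is where the constant $8$ in $\sqrt{\nu/8}$ is consumed.
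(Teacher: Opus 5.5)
The paper does not prove this statement; it is quoted from Cohen--Nelson--Woodruff and only invoked inside the proof of Lemma~\ref{lem:genreg}, so there is no in-paper argument to compare with. Your proof is correct and is the standard argument behind that result. It has four steps:
\begin{enumerate}
\item the Pythagorean split $\norm{\bA\bXt-\bB}^2\le\norm{\bA(\bXt-\bX^*)}^2+\norm{\mathbf{R}}^2$;
\item the identity $\bA(\bXt-\bX^*)=\bU_{\bA}(\bS^\top\bU_{\bA})^\dagger\bS^\top\mathbf{R}$;
\item the bound $\norm{(\bU_{\bA}^\top\bS\bS^\top\bU_{\bA})^{-1}}\le 2$ from the subspace embedding;
\item AMM on the pair $(\bU_{\bA},\mathbf{R})$, using $\bU_{\bA}^\top\mathbf{R}=0$.
\end{enumerate}
Your handling of rank-deficient $\bA$ in step 2 is right. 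The embedding makes $\bS^\top\bU_{\bA}$ injective, so the row space of $\bS^\top\bA$ equals that of $\bA$, and working with $\bA\bXt$ rather than $\bXt$ avoids any issue.

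You are also right to add $\rank(\bA)\le k$. Condition~1 literally assumes AMM only for the specific pair $(\bU_{\bA},\bP_{\bar{\bA}}\bB)$, and under that reading the claim is false when $\rank(\bA)>k$. Here is a counterexample:
\begin{itemize}
\item Take $k=1$, $n=3$, $\bA=[\mathbf{e}_1,\mathbf{e}_2]$ and $\bB=\mathbf{e}_3$.
\item Let $\bS\bS^\top$ have diagonal $(1/2,1,2c^2+1)$, entry $c$ in positions $(1,3)$ and $(3,1)$, and zeros elsewhere. This matrix is positive definite.
\item Set $c^2=3\nu/4$. The $1/2$-embedding holds, and AMM for the pair holds with equality, since its right-hand side is $\sqrt{(\nu/8)\cdot 3\cdot 2}$.
\item Then $\bXt=(2c,0)^\top$, so $\norm{\bA\bXt-\bB}^2=1+4c^2=1+3\nu$. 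This exceeds the claimed bound $1+2\nu$.
\end{itemize}
The paper only applies the result with $k\ge d$, and its proof of Lemma~\ref{lem:genreg} explicitly uses $\norm{\bU_{\bA}}_F^2/k\le 1$, so your restricted version covers every use.

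If instead AMM is read as holding for all pairs, as in the definition in Theorem~\ref{thm:amm}, you can drop the rank assumption entirely. Apply AMM to the rank-one $\bM=\bU_{\bA}\mathbf{w}$ with $\norm{\mathbf{w}}=1$ and take the supremum over $\mathbf{w}$. This replaces your factor $(1+d'/k)$ by $(1+1/k)\le 2$, and the rest of your argument goes through unchanged.
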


    \begin{proof}[Proof of Lemma \ref{lem:genreg}]
        We may assume that $\bS$ has the $(\e/2\sqrt{2}, \delta',2k,p)$-OSE moments property by adjusting constants in the lower bound for $r$ in the statement of Lemma \ref{lem:specapprox}, which get absorbed in the $O(\cdot)$ notation and using the claims in Sections 2.1.1-2.1.3 of \cite{cohen_nelson_woodruff}.
        
        Then, by Theorem \ref{thm:amm}, if $\bS$ has the $(\e/2\sqrt{2}, \delta',2k,p)$-OSE moments property, then with probability at least $1-\delta'$, $\bS^\top$ satisfies the $(k, \e/2\sqrt{2})$ approximate spectral norm matrix multiplication property,  for $\bU_{\bA}$ and $\bP_{\bar{\bA}}\bB = (\bI - \bP_{\bA})\bB$.  

        The same AMM property, applied to $\bU_{\bA}$ (which has rank at most $k$, so $\norm{\bU_{\bA}}_F^2/k \le 1$) with itself, gives
        \begin{equation}\label{eq:1/2se}
            \norm*{(\bS^\top\bU_{\bA})^\top(\bS^\top\bU_{\bA})-\bI}\le \frac{\e}{2\sqrt2} \paren*{ \norm{\bU_{\bA}}^2 + \norm{\bU_{\bA}}_F^2/k } \le \frac12
        \end{equation}
        with probability at least $1-\delta'$ for $\e \le 1/2$. Equivalently, $\bS^\top$ is a $1/2$-subspace embedding for the column space of $\bA$.

        Thus we can apply Theorem \ref{thm:genregcnw}, with $\sqrt{\nu/8} = \e/2\sqrt2$, or $\nu = \e^2$ to get the claimed bound after letting $\delta' = \delta/2$ and taking a union bound for conditions \ref{cond:amm} and \ref{cond:se} of Theorem \ref{thm:genregcnw} to hold simultaneously and absorbing constant factors into the sketch size lower bounds in Lemma \ref{lem:specapprox}.
    \end{proof}

\newpage
\end{document}